\numberwithin{equation}{section} 
\def\vect#1{\mbox{\boldmath $#1$}} 
\DeclareMathOperator{\loc}{loc}
\newtheorem{definition}{Definition}[section]
\newtheorem{theorem}[definition]{Theorem}
\newtheorem{lemma}[definition]{Lemma}
\newtheorem{remark}{Remark}[section]
\title[FDM for wave equations with dynamic boundary conditions]{
Structure-preserving finite difference schemes
for nonlinear wave equations with dynamic boundary conditions}
\author[A. Umeda]{Akihiro Umeda}
\address[A. Umeda]{
Graduate School of Science and Engineering,
Ehime University,
Matsuyama, 790-8577, Japan
}
\author[Y. Wakasugi]{Yuta Wakasugi}
\address[Y. Wakasugi]{
Laboratory of Mathematics,
Graduate School of Advanced Science and Engineering,
Hiroshima University,
Higashi-Hiroshima, 739-8527, Japan
}
\email{wakasugi@hiroshima-u.ac.jp}
\author[S. Yoshikawa]{Shuji Yoshikawa}
\address[S. Yoshikawa]{
Division of Mathematical Sciences, 
Faculty of Science and Technology, Oita University, 
Oita, 870-1192, Japan }
\email{yoshikawa@oita-u.ac.jp}
\begin{document}
\begin{abstract}
In this article we discuss the numerical analysis for the finite difference scheme of the one-dimensional nonlinear 
wave equations with dynamic boundary conditions. From the viewpoint of the discrete variational derivative method 
we propose the derivation of the structure-preserving finite difference schemes of the problem which covers a variety of equations as 
widely as possible. Next, we focus our attention on the semilinear wave equation, and show the existence and uniqueness of solution for 
the scheme and error estimates with the help of the inherited energy structure. 
\end{abstract}
\keywords{nonlinear wave equation;
dynamic boundary condition;
energy conservation scheme}

\maketitle
\tableofcontents

\section{Introduction}
Recently, a lot of interesting numerical analyses related with partial differential equations with the dynamic boundary condition are 
studied by many authors from various aspects (e.g. \cite{ch-pe}, \cite{4}, \cite{ko-lu2}, \cite{is-mi-pe}, \cite{na1}, \cite{na2}, \cite{fu-ok}, \cite{5} and 
reference therein). 
However, these results treat the parabolic equations such as the Cahn--Hilliard equation and the Allen--Cahn equation, and 
there seems to be no result which treats hyperbolic problems as far as authors know. 
In this article we discuss the numerical analysis from the viewpoint of a structure-preserving finite difference method for 1-D nonlinear wave equations 
with dynamic boundary conditions. 

The evolution equation with the dynamic boundary condition arises from the natural phenomena such as 
evolution with coating. 
The \emph{dynamic boundary condition} we call here means that the boundary condition itself is an evolution equation. 
The Cahn--Hilliard equation with dynamic boundary condition is extensively studied by many authors (see e.g. \cite{mi} and reference therein). 
On the other hand, there are a few results related with the wave equation with dynamic boundary condition in spite of the fact that the physical 
motivation is discussed in \cite{me}. 
In the article \cite{me}, to describe the dynamics of a piezoelectric stack actuator, 
the linear wave equation with a dynamic boundary condition is derived from the Hamilton principle. 
The corresponding simplified model to it is stated as follows:  
\begin{align*}
    \left\{ \begin{aligned}[2]
    &\frac{\partial^2 u}{\partial t^2} = \frac{\partial^2 u}{\partial x^2},
    &&(t,x) \in (0,T)\times (0,L),\\
    &\frac{\partial^2 u}{\partial t^2}(t,0)
    - \frac{\partial u}{\partial x}(t,0) = 0,
    &&t\in (0,T),\\
    &\frac{\partial^2 u}{\partial t^2}(t,L)
    + \frac{\partial u}{\partial x}(t,L) = 0,
    &&t\in (0,T).
    \end{aligned}\right.
\end{align*}
In this article, based on their idea in \cite{me}, 
we study the extended nonlinear problem including the problem. 
Namely, we introduce the derivation of the numerical schemes on the problems for a various kind of wave type equations by using the \emph{discrete variational derivative method} (DVDM) for wave type equations introduced in \cite{2}. 
DVDM is the systematic method to obtain finite difference schemes which inherit the energy conservation or decreasing (or possibly increasing) laws by defining the 
discrete version of variational derivative. For the precise information of the method we refer to \cite{4}. 
In this article, we apply the method to the hyperbolic problem with dynamic boundary conditions.  
As mentioned in Section 3,
as far as we concern the derivation of the scheme,
the procedure in this article can be applied to not only semilinear wave equations but also quasilinear wave equations. 

The \emph{energy method} is known as a classical method for differential equations to obtain various properties of their solutions by utilizing the energy structure of equations. 
One of examples is to construct a global-in-time solution by extending a local-in-time solution 
with the help of the energy conservation law as the a priori estimate. 
The method is easily extended to structure-preserving finite difference schemes thanks to their 
inherited energy structure. 
By the energy method we can prove a lot of things such as a construction of global-in-time solution 
for the scheme, a construction of local-in-time solution for the scheme and an error estimate 
between a strict solution for original problem and a solution for the scheme.  
For the energy method of the structure-preserving finite difference schemes, we refer to 
\cite{yo2} and \cite{3}. 
The method is applicable to the Cahn-Hilliard equation with dynamic boundary condition
(\cite{4}), and the procedure is more sophisticated in \cite{fu-ok} and \cite{5}. 
An example of application of the method to the wave type equation is given in \cite{ya-yo}. 
We give a numerical analysis such as the unique existence of solution for the scheme and the error estimate between a strict solution and the approximate solution to the semilinear wave equation, 
combining the ideas given in \cite{5} and \cite{ya-yo}. 

Let
$L>0$
and
$T>0$.
We consider
the nonlinear wave equation with
dynamic boundary conditions
\begin{align}\label{eq:ndw}
    \left\{ \begin{aligned}[2]
    &\frac{\partial^2 u}{\partial t^2} = - \frac{\delta G}{\delta u},
    &&(t,x) \in (0,T)\times (0,L),\\
    &\frac{\partial^2 u}{\partial t^2}(t,0)
    - \left. \frac{\partial G}{\partial u_x}\right|_{x=0} = 0,
    &&t\in (0,T),\\
    &\frac{\partial^2 u}{\partial t^2}(t,L)
    + \left. \frac{\partial G}{\partial u_x}\right|_{x=L} = 0,
    &&t\in (0,T).
    \end{aligned}\right.
\end{align}
Here,
$u$
denotes a real-valued unknown function
and
$G = G(u,u_x)$
is a density function of potential energy,
and we write
\begin{align}
    \frac{\delta G}{\delta u}
    = \frac{\partial G}{\partial u} - \frac{\partial}{\partial x} \frac{\partial G}{\partial u_x}.
\end{align}
We also used the notation such as $u_t:= \frac{\partial u}{\partial t}$ and $u_x := \frac{\partial u}{\partial x}$.

The equation \eqref{eq:ndw} is derived by
a variational method from the
energy of the form
\begin{align}
    J(u(t))
    &= \int_0^L
    \left\{ \frac{1}{2} u_t(t,x)^2 + G(u,u_x) \right\} \,dx
    + \frac{1}{2} u_t(t,0)^2 + \frac{1}{2} u_t(t,L)^2.
\end{align}
Indeed, we have
\begin{align}\label{eq:dtJ}
    \frac{d}{dt} J(u(t))
    &= \int_0^L \left\{ u_{tt} u_t + \frac{\partial}{\partial t} G(u,u_x) \right\} \,dx \\
    &\quad 
    + u_{tt}(t,0) u_t(t,0) + u_{tt}(t,L) u_t(t,L).
\end{align}
The integration by parts implies
\begin{align}
    \int_0^L \frac{\partial}{\partial t} G(u,u_x) \,dx
    &=
    \int_0^L \left(
        \frac{\partial G}{\partial u_x} u_{tx} + \frac{\partial G}{\partial u} u_t
    \right)\,dx \\
    &= \int_0^L \left(
            \frac{\partial G}{\partial u} - \frac{\partial}{\partial x} \frac{\partial G}{\partial u_x} 
            \right) u_t \,dx
        + \left[
            \frac{\partial G}{\partial u_x} u_t
        \right]_0^L \\
    &= \int_0^L \frac{\delta G}{\delta u} u_t \,dx
        + \left[
            \frac{\partial G}{\partial u_x} u_t
        \right]_0^L .
\end{align}
Therefore, the equation \eqref{eq:dtJ} becomes
\begin{align}
    \frac{d}{dt} J(u(t))
    &= \int_0^L \left( u_{tt} + \frac{\delta G}{\delta u} \right) u_t \,dx \\
    &\quad
    + \left[
            \frac{\partial G}{\partial u_x} u_t
        \right]_0^L
    + u_{tt}(t,0) u_t(t,0) + u_{tt}(t,L) u_t(t,L).
\end{align}
Thus, the equations \eqref{eq:ndw} are
derived so that the energy
$J(u(t))$
is preserved.

In particular, when the density function has the form
\begin{align}
    G(u,u_x) = \frac{1}{2} u_x^2 + F(u),
\end{align}
and the energy is defined by
\begin{align}\label{eq:J:sdw}
    J(u(t))
    &=  \int_0^L
    \left(
    \frac{1}{2} \left( u_t(t,x)^2 + u_x(t,x)^2 \right) + F(u) \right) \,dx \\
    &\quad + \frac{1}{2}u_t(t,0)^2 + \frac{1}{2}u_t(t,L)^2,
\end{align}
the derived equation is semilinear, and the
corresponding problem to \eqref{eq:ndw} is given by
\begin{align}\label{eq:sdw}
    \left\{ \begin{aligned}[2]
    &\frac{\partial^2 u}{\partial t^2} = \frac{\partial^2 u}{\partial x^2} - F'(u),
    &&(t,x) \in (0,T)\times (0,L),\\
    &\frac{\partial^2 u}{\partial t^2}(t,0)
    - \frac{\partial u}{\partial x}(t,0) = 0,
    &&t\in (0,T),\\
    &\frac{\partial^2 u}{\partial t^2}(t,L)
    + \frac{\partial u}{\partial x}(t,L) = 0,
    &&t\in (0,T).
    \end{aligned}\right.
\end{align}
As also mentioned above, in this article, we will derive
a difference scheme for the problem \eqref{eq:sdw}
which inherits the energy conservation property
for the discrete energy corresponding to \eqref{eq:J:sdw}.
Moreover, we discuss the existence of the solution
for the derived difference scheme,
and estimates of errors between
the strict and approximate solutions.
We also give an energy conservative difference scheme
corresponding to the general nonlinear wave equation
\eqref{eq:ndw}.

This paper is organized as follows.
In the next section, we prepare
the notations for difference operators and summations
used in this paper.
The notion and properties of difference quotient
are also given.
In Section 3, we give
the difference schemes corresponding to the 
semilinear problem \eqref{eq:sdw}
and general nonlinear problem \eqref{eq:ndw}.
We also give some typical examples.
Section 4 is devoted to
the existence of the solution for
the difference scheme of the semilinear problem
derived in Section 3.
In section 5, we discuss about
estimates of errors for 
the difference scheme in the semilinear case.
Finally, in Section 6,
we show some numerical computations.

\section{Preliminaries}
\subsection{Notations}
Let
$L, T > 0$,
and let
$N$ and $K$ be positive integers.
Let
$\Delta t = T/N$
and
$\Delta x = L/K$.

For a family of vectors
$\{ f_k^{(n)} \}_{k=0}^K \ (n=0,1,\ldots,N)$,
we define the shift operators by
\begin{alignat}{3}
    s_n^+ f_k^{(n)}
    &= f_k^{(n+1)},
    &\quad
    s_n^- f_k^{(n)}
    &= f_k^{(n-1)},
    &\quad
    s_n^{\langle 1 \rangle} f_k^{(n)}
    &= \frac{1}{2}\left( f_k^{(n+1)}+f_k^{(n-1)}\right),\\
    s_k^+ f_k^{(n)}
    &= f_{k+1}^{(n)},
    &\quad
    s_k^- f_k^{(n)}
    &= f_{k-1}^{(n)},
    &\quad
    s_k^{\langle 1 \rangle} f_k^{(n)}
    &= \frac{1}{2}\left( f_{k+1}^{(n)} + f_{k-1}^{(n)} \right),
\end{alignat}
and the mean operators by
\begin{alignat}{2}
    \mu_n^+ f_k^{(n)}
    &= \left( \frac{s_n^+ + 1}{2} \right) f_k^{(n)},
    &\quad
    \mu_n^- f_k^{(n)}
    &=\left( \frac{s_n^- + 1}{2} \right) f_k^{(n)} ,\\
    \mu_k^+ f_k^{(n)}
    &= \left( \frac{s_k^+ + 1}{2} \right) f_k^{(n)},
    &\quad
    \mu_k^- f_k^{(n)}
    &= \left( \frac{s_k^- + 1}{2} \right) f_k^{(n)}.
\end{alignat}
Define the first-order difference operators by
\begin{alignat}{3}
    \delta_n^+ f_{k}^{(n)} &= \frac{f_k^{(n+1)}-f_k^{(n)}}{\Delta t},
    &\quad \delta_n^- f_{k}^{(n)} &= \frac{f_k^{(n)}-f_k^{(n-1)}}{\Delta t},
    &\quad \delta_n^{\langle 1\rangle} f_k^{(n)} &=
    \frac{f_k^{(n+1)}-f_k^{(n-1)}}{2\Delta t},\\
    \delta_k^+ f_{k}^{(n)} &= \frac{f_{k+1}^{(n)}-f_k^{(n)}}{\Delta x},
    &\quad \delta_k^- f_{k}^{(n)} &= \frac{f_k^{(n)}-f_{k-1}^{(n)}}{\Delta x},
    &\quad \delta_k^{\langle 1\rangle} f_k^{(n)} &=
    \frac{f_{k+1}^{(n)}-f_{k-1}^{(n)}}{2\Delta x},
\end{alignat}
and the second-order difference operators by
\begin{align}
    \delta_n^{\langle 2 \rangle} f_k^{(n)}
    = \frac{f_{k}^{(n+1)}-2f_k^{(n)}+f_k^{(n-1)}}{(\Delta t)^2},\quad
    \delta_k^{\langle 2 \rangle} f_k^{(n)}
    = \frac{f_{k+1}^{(n)} - 2 f_k^{(n)} + f_{k-1}^{(n)}}{(\Delta x)^2}.
\end{align}
We will use a summation defined by
\begin{align}
    \sum_{k=0}^K{}'' f_k^{(n)} \Delta x
    = \frac{1}{2} f_0^{(n)}\Delta x
    + \sum_{k=1}^{K-1} f_k^{(n)} \Delta x
    + \frac{1}{2} f_K^{(n)} \Delta x,
\end{align}
which follows the trapezoidal rule. 
We note that it is also expressed as
\begin{align}
     \sum_{k=0}^K{}'' f_k^{(n)} \Delta x
    = \frac{1}{2} \sum_{k=1}^K f_{k}^{(n)} \Delta x
    + \frac{1}{2} \sum_{k=0}^{K-1} f_{k}^{(n)} \Delta x.
\end{align}
The summation by parts formula is given by
\begin{equation}\label{sbp}
    \sum_{k=0}^{K-1} f_k^{(n)} \delta_k^+ g_k^{(n)} \Delta x
    + \sum_{k=0}^K{}'' (\delta_k^- f_k^{(n)}) g_k^{(n)} \Delta x
    =
    \left[ \left( \mu_k^- f_k^{(n)} \right) g_k^{(n)}
    \right]_0^K, 
\end{equation}
which can be found in e.g. \cite{5}. 

For
$\vect{U} = \{ U_k \}_{k=0}^K \in \mathbb{R}^{K+1}$,
we define the norms 
\begin{align}
    \| \vect{U} \|_{\infty}
    &:= \max_{0\le k \le K} |U_k|,\\
    \| \vect{U} \|_2
    &:= \left( \sum_{k=0}^{K}{}'' |U_k|^2 \Delta x \right)^{1/2},\\
    \| \vect{U} \|_{H^1}
    &:= \left(
        \sum_{k=0}^K{}'' |U_k|^2 \Delta x
        + \sum_{k=0}^{K-1} |\delta_k^+ U_k|^2 \Delta x
    \right)^{1/2}.
\end{align}
We will also denote the semi-norm $(\sum_{k=0}^{K-1} |\delta_k^+ U_k|^2 \Delta x)^{1/2}$ by $\| D \vect{U} \|$. 
\begin{lemma}[Discrete Sobolev lemma (\cite{yo2}, Proposition 2.2)]\label{lem:Sobolev}
For $\vect{U} = \{ U_k \}_{k=0}^{K} \in \mathbb{R}^{K+1}$,
we have
\begin{align}
    \| \vect{U} \|_{\infty}
    \le C_S
    \| \vect{U} \|_{H^1}, 
\end{align}
where $C_S := \sqrt{\frac{\sqrt{1+4L^2}+1}{2L}}$. 
\end{lemma}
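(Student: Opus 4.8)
The plan is to mimic the classical proof of the one-dimensional embedding $H^1(0,L)\hookrightarrow L^\infty(0,L)$ at the discrete level, while tracking the constants carefully so as to reproduce exactly the stated $C_S$. Let $m$ be an index at which the maximum is attained, so that $\|\vect{U}\|_\infty^2 = U_m^2$. For any index $j$, I would first express the difference of squares as a telescoping sum. Since $U_{k+1}^2 - U_k^2 = (U_{k+1}+U_k)(U_{k+1}-U_k) = 2(\mu_k^+ U_k)(\delta_k^+ U_k)\,\Delta x$, summing from $\min(j,m)$ to $\max(j,m)-1$ gives, up to sign,
\begin{align}
    U_m^2 - U_j^2 = \pm\, 2\sum_{k} (\mu_k^+ U_k)(\delta_k^+ U_k)\,\Delta x,
\end{align}
where the sum runs over a subset of $\{0,\dots,K-1\}$.

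Next I would apply the Cauchy--Schwarz inequality to this sum and then enlarge the range of summation to all of $\{0,\dots,K-1\}$ (legitimate because the omitted summands are nonnegative), obtaining
\begin{align}
    |U_m^2 - U_j^2| \le 2\left(\sum_{k=0}^{K-1}(\mu_k^+ U_k)^2\,\Delta x\right)^{1/2}\|D\vect{U}\|.
\end{align}
The factor involving $\mu_k^+$ is controlled by $\|\vect{U}\|_2$: the elementary bound $(\mu_k^+ U_k)^2 \le \tfrac12(U_{k+1}^2+U_k^2)$ together with the second expression given for the trapezoidal sum shows $\sum_{k=0}^{K-1}(\mu_k^+ U_k)^2\,\Delta x \le \|\vect{U}\|_2^2$. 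Hence $|U_m^2 - U_j^2|\le 2\|\vect{U}\|_2\|D\vect{U}\|$ for every $j$.

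The decisive step is to average the resulting inequality $U_m^2 \le U_j^2 + 2\|\vect{U}\|_2\|D\vect{U}\|$ over $j$ using the trapezoidal weights. Since $\sum_{j=0}^K{}''\,\Delta x = K\Delta x = L$, this produces
\begin{align}
    U_m^2 \le \frac1L\|\vect{U}\|_2^2 + 2\|\vect{U}\|_2\|D\vect{U}\|.
\end{align}
Finally I would apply Young's inequality $2\|\vect{U}\|_2\|D\vect{U}\| \le \alpha\|\vect{U}\|_2^2 + \alpha^{-1}\|D\vect{U}\|^2$ and choose $\alpha>0$ to balance the two resulting coefficients, that is, to enforce $\tfrac1L + \alpha = \alpha^{-1}$. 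The positive root is $\alpha = \tfrac{\sqrt{1+4L^2}-1}{2L}$, and $\alpha^{-1}$ simplifies precisely to $\tfrac{\sqrt{1+4L^2}+1}{2L} = C_S^2$, which yields $\|\vect{U}\|_\infty^2 \le C_S^2\|\vect{U}\|_{H^1}^2$.

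I expect the main obstacle to be bookkeeping rather than anything conceptual: the averaging over $j$ with the primed (trapezoidal) summation must reproduce exactly the coefficient $1/L$, and the optimization in $\alpha$ has to be carried out so that a single constant multiplies the full $H^1$-norm. A secondary point requiring care is the enlargement of the summation range in the Cauchy--Schwarz step and the compatibility of the convexity bound on $\mu_k^+$ with the trapezoidal rule; both are routine once the telescoping identity is in place.
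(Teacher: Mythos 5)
Your proof is correct: the telescoping identity, the Cauchy--Schwarz step with enlargement of the summation range, the convexity bound $(\mu_k^+ U_k)^2 \le \tfrac12(U_{k+1}^2+U_k^2)$ matched to the trapezoidal rule, the averaging over $j$ (which indeed gives the factor $1/L$ since $\sum_{j=0}^K{}''\,\Delta x = L$), and the balancing $\tfrac1L+\alpha=\alpha^{-1}$ all check out and reproduce exactly $C_S^2 = \tfrac{\sqrt{1+4L^2}+1}{2L}$. Note that the paper itself supplies no proof of this lemma --- it is quoted from the reference [yo2, Proposition 2.2] --- and your argument is precisely the standard discrete-Sobolev argument that the form of the stated constant reflects, so there is nothing to contrast.
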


\subsection{Notion and some properties of difference quotient}
We shall use two types of
difference quotient.
For a $C^1$ function
$F: \mathbb{R} \to \mathbb{R}$,
we define
the two-point difference quotient
by
\begin{align}
    \frac{dF}{d(a,b)}
    :=
    \begin{dcases}
    \frac{F(a) - F(b)}{a-b}
    &(a\neq b),\\
    F'(a)
    &(a=b).
    \end{dcases}
\end{align}
Also, following \cite{1, 2},
we use the notation of four-point difference quotient.
For a $C^1$ function
$f : \mathbb{R}^2 \to \mathbb{R}$
and for 
$a,b,c,d \in \mathbb{R}$,
we define
\begin{align}
    \frac{d(f,\loc)}{d(a,b:c,d)}
    :=
    \begin{dcases}
    \frac{f(a,b)-f(c,d)}{\loc (a,b)-\loc (c,d)}
    &(\loc (a,b) \neq \loc (c,d)),\\
   \left. \frac{d f(x,x)}{dx} \right|_{x=\loc (a,b)}
   &(\loc (a,b) = \loc (c,d)),
    \end{dcases}
\end{align}
where 
\begin{align}
    \loc (a,b) := \frac{1}{2}(a+b).
\end{align}

In the mathematical analysis for the numerical scheme in Sections 4 and 5, 
we impose some assumption which makes us regard the four-point finite difference 
quotient as two-point one, for simplicity. 
We can utilize the following useful properties for the two-point difference quotient introduced in e.g. \cite{yo2}.
Let $\Omega = [0,L]$.
For $F \in C^1(\Omega)$ and $\xi, \eta \in \Omega$,
we have
\begin{equation}\label{mvt}
\inf_{\xi \in \Omega} F'(\xi) \leq \frac{d F}{d(\xi, \eta)}
	\leq \sup_{\xi \in \Omega} F'(\xi),
\end{equation}
which is easily seen from the mean value theorem. 
As we will see later, a subtraction between difference quotients often appears in the proofs of existence and error estimate.  
To treat it easily and systematically, 
we define the \textit{averaged second order difference quotient} $\overline{F}''(\xi,\tilde{\xi};\eta,\tilde{\eta})$ for $F \in C^2$ by    
\begin{equation*}
\begin{split}
\overline{F}''(\xi,\tilde{\xi};\eta,\tilde{\eta}) &:= \frac{d}{d (\xi,\tilde{\xi})}
	 \left( \frac{d F}{d(\cdot, \eta)} + \frac{d F}{d(\cdot, \tilde{\eta})} \right)  \\ 
	&= 
	\begin{dcases} \frac{1}{\xi-\tilde{\xi}} \left\{  
	\left( \frac{d F}{d(\xi, \eta)} + \frac{d F}{d(\xi, \tilde{\eta})} \right) 
		-  \left( \frac{d F}{d(\tilde{\xi}, \eta)} + \frac{d F}{d(\tilde{\xi}, \tilde{\eta})} \right) \right\}
		&(\xi \neq \tilde{\xi}), \\ 
	 \partial_{\xi} 
		\left( \frac{d F}{d(\xi, \eta)} + \frac{d F}{d(\xi, \tilde{\eta})} \right) \big|_{\xi=\tilde{\xi}}
	&(\xi = \tilde{\xi}), 
	\end{dcases}
\end{split}
\end{equation*}
which is a kind of second order difference quotient. 
For $F \in C^2(\Omega)$ and $\xi$, $\tilde{\xi}$, $\eta$, $\tilde{\eta} \in \Omega$ it holds that 
\begin{equation}\label{eq:2nd-dq}
\frac{d F}{d(\xi, \eta)} - \frac{d F}{d(\tilde{\xi}, \tilde{\eta})} 
	= \frac{1}{2} \overline{F}''(\xi,\tilde{\xi};\eta,\tilde{\eta}) \cdot (\xi-\tilde{\xi}) 
	+  \frac{1}{2} \overline{F}''(\eta,\tilde{\eta};\xi,\tilde{\xi}) \cdot (\eta-\tilde{\eta}). 
\end{equation}
If $F \in C^2(\Omega)$, then $\overline{F}'' \in C(\Omega)$. Moreover, 
for any $\xi$, $\tilde{\xi}$, $\eta$, $\tilde{\eta} \in \Omega$ it holds that 
\begin{equation}\label{mvt2}
\inf_{\xi \in \Omega} F''(\xi) \leq \overline{F}''(\xi,\tilde{\xi};\eta,\tilde{\eta}) 
	\leq \sup_{\xi \in \Omega} F''(\xi).  
\end{equation}
For the proof we refer to \cite{yo2}. 
Let us denote the vector form of the difference quotient by
$\dfrac{d F}{d(\vect{U}, \vect{V})} := \left\{ \dfrac{d F}{d(U_k, V_k)} \right\}_{k=0}^K$.
By using these properties we obtain the following lemma. 
\begin{lemma}\label{prop-f}
Suppose that
$F\in C^2$. 
Let
$\vect{U}$, $\vect{V} \in H^1$
and set
$R := \max \{ \| \vect{U} \|_{H^1},  \| \vect{V} \|_{H^1} \}$. 
Let us define $C_{F}(\rho) := \max_{|\xi| \leq \rho} |F(\xi)|$. 
\begin{enumerate}
\item It holds that 
\begin{equation}\label{eq:1st-dq-l2}
\left\| \frac{d F}{d(\vect{U}, \vect{V})} \right\|_2 \leq \min \left\{  C_{F'}(C_S R) \sqrt{L}, \  |F'(0)| \sqrt{L} + C_{F''}(C_S R) (\| \vect{U} \|_2 + \| \vect{V} \|_2) \right\}. 
\end{equation}
where $C_S$ is defined in Lemma \ref{lem:Sobolev}. 
\item It holds that 
\begin{equation}\label{eq:2nd-dq-l2}
\left\| \frac{d F}{d(\vect{U}, \vect{V})} - \frac{d F}{ d(\tilde{\vect{U}}, \tilde{\vect{V}}) }  \right\|_2 \leq
\frac{1}{2} C_{F''}(C_S R) (\| \vect{U} - \tilde{\vect{U}} \|_2 + \| \vect{V} - \tilde{\vect{V}} \|_2)
\end{equation}
\end{enumerate}
\end{lemma}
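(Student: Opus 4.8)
The plan is to reduce both estimates to pointwise bounds at each grid index $k$ and then reassemble them with the trapezoidal $\ell^2$-norm, using the elementary identity $\sum_{k=0}^{K}{}'' \Delta x = L$. The crucial preliminary observation is that, by the discrete Sobolev inequality (Lemma \ref{lem:Sobolev}), $\|\vect{U}\|_\infty, \|\vect{V}\|_\infty \le C_S R$, so that every argument $U_k, V_k$ lies in $[-C_S R, C_S R]$. This is exactly what allows me to replace the suprema of $F'$ and $F''$ appearing in \eqref{mvt} and \eqref{mvt2} by the constants $C_{F'}(C_S R)$ and $C_{F''}(C_S R)$.

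For the first bound in (1), I would apply the mean value estimate \eqref{mvt} at each $k$ to obtain $\left| \frac{d F}{d(U_k, V_k)} \right| \le C_{F'}(C_S R)$, and then take the trapezoidal $\ell^2$-norm, which produces $C_{F'}(C_S R)\sqrt{L}$ since $\sum_{k=0}^{K}{}''\Delta x = L$. For the second bound I would first note that $F'(0) = \frac{d F}{d(0,0)}$ and then invoke the averaged second-order identity \eqref{eq:2nd-dq} with $(\xi,\eta) = (U_k, V_k)$ and $(\tilde{\xi},\tilde{\eta}) = (0,0)$, giving
\begin{equation*}
\frac{d F}{d(U_k, V_k)} - F'(0)
= \frac12 \overline{F}''(U_k, 0; V_k, 0)\, U_k
+ \frac12 \overline{F}''(V_k, 0; U_k, 0)\, V_k.
\end{equation*}
Bounding each $\overline{F}''$ by $C_{F''}(C_S R)$ via \eqref{mvt2} yields the pointwise estimate $\left| \frac{d F}{d(U_k, V_k)} \right| \le |F'(0)| + \frac12 C_{F''}(C_S R)(|U_k| + |V_k|)$; taking norms and applying the triangle inequality then gives the claimed bound, in fact with an extra factor $\frac12$ to spare.

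Part (2) uses the same mechanism applied directly to the difference of two quotients. Using \eqref{eq:2nd-dq} at each $k$ with $(\xi,\eta) = (U_k, V_k)$ and $(\tilde{\xi},\tilde{\eta}) = (\tilde{U}_k, \tilde{V}_k)$,
\begin{equation*}
\frac{d F}{d(U_k, V_k)} - \frac{d F}{d(\tilde{U}_k, \tilde{V}_k)}
= \frac12 \overline{F}''(U_k, \tilde{U}_k; V_k, \tilde{V}_k)(U_k - \tilde{U}_k)
+ \frac12 \overline{F}''(V_k, \tilde{V}_k; U_k, \tilde{U}_k)(V_k - \tilde{V}_k),
\end{equation*}
and bounding both $\overline{F}''$ factors by $C_{F''}(C_S R)$ through \eqref{mvt2} produces a pointwise estimate whose trapezoidal $\ell^2$-norm is precisely the right-hand side of \eqref{eq:2nd-dq-l2}.

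The computations are routine once \eqref{eq:2nd-dq} and \eqref{mvt2} are in hand; the only point requiring genuine care is the range of the arguments. All of $U_k, V_k$—and, in part (2), also $\tilde{U}_k, \tilde{V}_k$—must lie in $[-C_S R, C_S R]$ for the constants $C_{F'}(C_S R)$ and $C_{F''}(C_S R)$ to be admissible, which is exactly where Lemma \ref{lem:Sobolev} enters. For part (2) this tacitly requires $\|\tilde{\vect{U}}\|_{H^1}, \|\tilde{\vect{V}}\|_{H^1} \le R$ as well, so I would read $R$ as a common bound for all four vectors and state this explicitly at the outset.
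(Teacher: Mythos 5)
Your proposal is correct and follows essentially the same route as the paper: the discrete Sobolev bound to confine all arguments to $[-C_S R, C_S R]$, the estimate \eqref{mvt} for the first bound in (1), the identity \eqref{eq:2nd-dq} applied with $(\tilde{\xi},\tilde{\eta})=(0,0)$ together with \eqref{mvt2} for the second bound in (1), and \eqref{eq:2nd-dq} with \eqref{mvt2} again for (2). Your closing remark that part (2) tacitly requires $\|\tilde{\vect{U}}\|_{H^1}, \|\tilde{\vect{V}}\|_{H^1} \le R$ is a valid clarification of the statement (and consistent with how the lemma is invoked later in the paper), not a deviation from its proof.
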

\begin{proof}
Observe that
$\| \vect{U} \|_{\infty}, \| \vect{V} \|_{\infty} \leq C_S R$
from the Sobolev inequality. 
From \eqref{mvt} we have 
\[
\left\| \frac{d F}{d(\vect{U}, \vect{V})} \right\|_2 \leq C_{F'}(C_S R) \sqrt{L}. 
\]
On the other hand, since from \eqref{eq:2nd-dq} 
\[
\frac{d F}{d(U_k, V_k)} - F'(0) 
	= \frac{1}{2} \overline{F}''(U_k,0;V_k,0) \cdot U_k 
	+  \frac{1}{2} \overline{F}''(V_k,0;U_k,0) \cdot V_k, 
\]
we obtain 
\[
\left\| \frac{d F}{d(\vect{U}, \vect{V})} \right\|_2 \leq |F'(0)| \sqrt{L} + C_{F''}(C_S R) (\| \vect{U} \|_2 + \| \vect{V} \|_2),
\] 
which proves (1). 
The second estimate (2) is an easy consequence from \eqref{eq:2nd-dq} and \eqref{mvt2}. 
\end{proof}

\section{Energy conservation schemes}
\subsection{Semilinear case}
We consider the energy conservation scheme
for the semilinear wave equation \eqref{eq:sdw}.
Let
$F : \mathbb{R}^2 \to \mathbb{R}$
be a
$C^1$
function.
For
$\vect{U}^{(n)} = \{ U_k^{(n)} \}_{k=-1}^{K+1} \in \mathbb{R}^{K+3} \ (n=0,1,\ldots,N)$,
we define
discrete energy density functions
$G_{\mathrm{d},k}^+ (\vect{U}^{(n+1)}, \vect{U}^{(n)})$
and
$G_{\mathrm{d},k}^- (\vect{U}^{(n+1)}, \vect{U}^{(n)})$
by the form
\begin{align}\label{eq:semi:Gdk:+}
    G_{\mathrm{d},k}^+ (\vect{U}^{(n+1)}, \vect{U}^{(n)})
    &= 
    \frac{1}{2}
    \left(
        \frac{(\delta_k^+ U_k^{(n+1)})^2 + (\delta_k^+ U_k^{(n)})^2}{2}
    \right)
    + F(U_k^{(n+1)}, U_k^{(n)}),\\
\label{eq:semi:Gdk:-}
     G_{\mathrm{d},k}^- (\vect{U}^{(n+1)}, \vect{U}^{(n)})
    &= 
    \frac{1}{2} s_k^- 
    \left(
        \frac{(\delta_k^+ U_k^{(n+1)})^2 + (\delta_k^+ U_k^{(n)})^2}{2}
    \right)
    + F(U_k^{(n+1)}, U_k^{(n)}).
\end{align}
Moreover, we define a discrete energy
$J_{\mathrm{d}}(\vect{U}^{(n+1)}, \vect{U}^{(n)})$
by
\begin{align}\label{eq:semi:J}
    J_{\mathrm{d}}(\vect{U}^{(n+1)}, \vect{U}^{(n)})
    &= \sum_{k=0}^K{}'' \frac{1}{2} (\delta_n^+ U_k^{(n)})^2 \Delta x \\
    &\quad + 
    \frac{1}{2} \sum_{k=0}^{K-1}
    G_{\mathrm{d},k}^+(\vect{U}^{(n+1)}, \vect{U}^{(n)}) \Delta x
    + \frac{1}{2} \sum_{k=1}^K
    G_{\mathrm{d},k}^- (\vect{U}^{(n+1)}, \vect{U}^{(n)}) \Delta x \\
    &\quad
    + \frac{1}{2} (\delta_n^+ U_0^{(n)})^2
    + \frac{1}{2} (\delta_n^+ U_K^{(n)})^2 \\
    &=
    \sum_{k=0}^K{}'' \frac{1}{2} (\delta_n^+ U_k^{(n)})^2 \Delta x \\
    &\quad +
    \sum_{k=0}^{K-1} 
    \frac{1}{2}
    \left(
        \frac{(\delta_k^+ U_k^{(n+1)})^2 + (\delta_k^+ U_k^{(n)})^2}{2}
    \right)
    \Delta x
    + \sum_{k=0}^K{}'' F(U_k^{(n+1)}, U_k^{(n)}) \Delta x \\
    &\quad
    + \frac{1}{2} (\delta_n^+ U_0^{(n)})^2
    + \frac{1}{2} (\delta_n^+ U_K^{(n)})^2.
\end{align}
Our difference scheme which
preserves the energy
$J_{\mathrm{d}}(\vect{U}^{(n+1)}, \vect{U}^{(n)})$
is the following.
\begin{align}\label{eq:scheme:semi}
    \left\{
    \begin{alignedat}{1}
    &\delta_n^{\langle 2\rangle} U_k^{(n)}
    = \delta_k^{\langle 2 \rangle}
    \left(
        \frac{U_k^{(n+1)}+U_k^{(n-1)}}{2}
    \right)
    -
        \frac{d(F,\loc)}{d(U_k^{(n+1)},U_k^{(n)} : U_k^{(n)}, U_k^{(n-1)})}, \\
    &\qquad \qquad \qquad \qquad \qquad \qquad \qquad \qquad 
        (k=0,\ldots,K, n=1,\ldots,N-1),\\
    &\delta_n^{\langle 2\rangle} U_0^{(n)}
        - \delta_k^{\langle 1 \rangle}
        \left(
            \frac{U_0^{(n+1)}+U_0^{(n-1)}}{2}
        \right)
        = 0,
        \qquad (n=1,\ldots,N-1),\\
    &\delta_n^{\langle 2\rangle} U_K^{(n)}
       + \delta_k^{\langle 1 \rangle}
        \left(
            \frac{U_K^{(n+1)}+U_K^{(n-1)}}{2}
        \right)
        = 0,
        \qquad (n=1,\ldots,N-1).
    \end{alignedat}
    \right.
\end{align}
The proposed scheme above preserves the energy:
\begin{theorem}[Energy conservation law]\label{thm:energy:conserv:semi}
If
$\vect{U}^{(n)} = \{ U_k^{(n)} \}_{k=-1}^{K+1} \in \mathbb{R}^{K+3} \ (n=0,1,\ldots,N)$
satisfies the scheme \eqref{eq:scheme:semi},
then
$\delta_n^- J_{\mathrm{d}}(\vect{U}^{(n+1)}, \vect{U}^{(n)}) = 0$
for all
$n =1,\ldots,N-1$.
\end{theorem}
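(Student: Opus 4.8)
The plan is to evaluate the backward time difference
\[
\delta_n^- J_{\mathrm{d}}(\vect{U}^{(n+1)}, \vect{U}^{(n)})
= \frac{1}{\Delta t}\left( J_{\mathrm{d}}(\vect{U}^{(n+1)}, \vect{U}^{(n)}) - J_{\mathrm{d}}(\vect{U}^{(n)}, \vect{U}^{(n-1)}) \right)
\]
by splitting $J_{\mathrm{d}}$ into its kinetic, gradient, and potential parts, computing the difference of each, and then showing that after one summation by parts every interior index collapses onto the interior equation of \eqref{eq:scheme:semi} while the two boundary indices collapse onto the two dynamic boundary equations. Throughout I would use the factorization $a^2-b^2=(a+b)(a-b)$ together with $\delta_n^+ U_k^{(n)} + \delta_n^- U_k^{(n)} = 2\delta_n^{\langle 1\rangle} U_k^{(n)}$ and $\delta_n^+ U_k^{(n)} - \delta_n^- U_k^{(n)} = \Delta t\,\delta_n^{\langle 2\rangle} U_k^{(n)}$, noting that lowering the time index from $n$ to $n-1$ turns $\delta_n^+ U_k^{(n)}$ into $\delta_n^- U_k^{(n)}$ and $\delta_k^+ U_k^{(n+1)}$ into $\delta_k^+ U_k^{(n)}$.

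For the kinetic contributions — the sum $\sum_{k=0}^K{}'' \frac12 (\delta_n^+ U_k^{(n)})^2 \Delta x$ and the two boundary squares $\frac12(\delta_n^+ U_0^{(n)})^2$, $\frac12(\delta_n^+ U_K^{(n)})^2$ — the identities above turn $\frac{1}{\Delta t}\bigl((\delta_n^+ U_k^{(n)})^2 - (\delta_n^- U_k^{(n)})^2\bigr)$ into $2\,\delta_n^{\langle 1\rangle} U_k^{(n)}\,\delta_n^{\langle 2\rangle} U_k^{(n)}$, so each kinetic square becomes $\delta_n^{\langle 1\rangle} U_k^{(n)}\,\delta_n^{\langle 2\rangle} U_k^{(n)}$. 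For the gradient part $\sum_{k=0}^{K-1} \frac14\bigl((\delta_k^+ U_k^{(n+1)})^2 + (\delta_k^+ U_k^{(n)})^2\bigr)\Delta x$ the same factorization in time yields $\sum_{k=0}^{K-1} \delta_k^+\bigl(\frac{U_k^{(n+1)}+U_k^{(n-1)}}{2}\bigr)\,\delta_k^+\bigl(\delta_n^{\langle 1\rangle} U_k^{(n)}\bigr)\Delta x$, to which I would apply \eqref{sbp} with $f_k = \delta_k^+\bigl(\frac{U_k^{(n+1)}+U_k^{(n-1)}}{2}\bigr)$ and $g_k = \delta_n^{\langle 1\rangle} U_k^{(n)}$. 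Since $\delta_k^- f_k = \delta_k^{\langle 2\rangle}\bigl(\frac{U_k^{(n+1)}+U_k^{(n-1)}}{2}\bigr)$ and $\mu_k^- f_k = \delta_k^{\langle 1\rangle}\bigl(\frac{U_k^{(n+1)}+U_k^{(n-1)}}{2}\bigr)$, this produces the interior sum $-\sum_{k=0}^K{}'' \delta_k^{\langle 2\rangle}\bigl(\frac{U_k^{(n+1)}+U_k^{(n-1)}}{2}\bigr)\delta_n^{\langle 1\rangle} U_k^{(n)} \Delta x$ plus the boundary pair $\bigl[\delta_k^{\langle 1\rangle}\bigl(\frac{U_k^{(n+1)}+U_k^{(n-1)}}{2}\bigr)\delta_n^{\langle 1\rangle} U_k^{(n)}\bigr]_0^K$.

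The potential part is where the four-point quotient enters: because $\loc(U_k^{(n+1)},U_k^{(n)}) - \loc(U_k^{(n)},U_k^{(n-1)}) = \frac12(U_k^{(n+1)} - U_k^{(n-1)}) = \Delta t\,\delta_n^{\langle 1\rangle} U_k^{(n)}$, the definition of the quotient gives $\frac{1}{\Delta t}\bigl(F(U_k^{(n+1)},U_k^{(n)}) - F(U_k^{(n)},U_k^{(n-1)})\bigr) = \frac{d(F,\loc)}{d(U_k^{(n+1)},U_k^{(n)}:U_k^{(n)},U_k^{(n-1)})}\,\delta_n^{\langle 1\rangle} U_k^{(n)}$ at each $k$, summed with $\sum_{k=0}^K{}''$. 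Collecting the three interior sums, each index $k$ carries the factor $\delta_n^{\langle 1\rangle} U_k^{(n)}$ multiplying $\delta_n^{\langle 2\rangle} U_k^{(n)} - \delta_k^{\langle 2\rangle}\bigl(\frac{U_k^{(n+1)}+U_k^{(n-1)}}{2}\bigr) + \frac{d(F,\loc)}{d(U_k^{(n+1)},U_k^{(n)}:U_k^{(n)},U_k^{(n-1)})}$, which is exactly the interior equation of \eqref{eq:scheme:semi} rearranged to zero and therefore vanishes. At $k=0$ the boundary kinetic term $\delta_n^{\langle 1\rangle} U_0^{(n)}\delta_n^{\langle 2\rangle} U_0^{(n)}$ and the summation-by-parts contribution $-\delta_k^{\langle 1\rangle}\bigl(\frac{U_0^{(n+1)}+U_0^{(n-1)}}{2}\bigr)\delta_n^{\langle 1\rangle} U_0^{(n)}$ combine into $\delta_n^{\langle 1\rangle} U_0^{(n)}$ times the left-hand side of the second equation of \eqref{eq:scheme:semi}, and the $k=K$ pair combines (with the opposite sign from the upper endpoint of the bracket) into $\delta_n^{\langle 1\rangle} U_K^{(n)}$ times the left-hand side of the third equation. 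Invoking \eqref{eq:scheme:semi} kills all three groups, giving $\delta_n^- J_{\mathrm{d}} = 0$.

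I expect the main obstacle to be organizational rather than conceptual: tracking signs correctly and passing cleanly between the trapezoidal sum $\sum_{k=0}^K{}''$ and the plain sums $\sum_{k=0}^{K-1}$, $\sum_{k=1}^K$ in which the energy densities are written, and then checking that the boundary data generated by \eqref{sbp} — which through $\delta_k^{\langle 1\rangle}$ involve the ghost values $U_{-1}^{(n)}$ and $U_{K+1}^{(n)}$ — match in sign exactly the boundary terms coming from the discrete kinetic energies $\frac12(\delta_n^+ U_0^{(n)})^2$ and $\frac12(\delta_n^+ U_K^{(n)})^2$. This precise cancellation is the structural reason those two extra boundary squares were built into $J_{\mathrm{d}}$, and verifying it is the heart of the argument.
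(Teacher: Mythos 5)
Your proposal is correct and takes essentially the same route as the paper's own proof: the same term-by-term evaluation of $\delta_n^- J_{\mathrm{d}}$, the identity $\delta_n^-(\delta_n^+ U_k^{(n)})^2 = 2\,\delta_n^{\langle 2\rangle}U_k^{(n)}\,\delta_n^{\langle 1\rangle}U_k^{(n)}$ for the kinetic terms, the time-factorization of the gradient term followed by summation by parts \eqref{sbp} with $\mu_k^-\delta_k^+ = \delta_k^{\langle 1\rangle}$ at the endpoints, and the four-point quotient identity for the potential term, after which the interior and boundary groups vanish by \eqref{eq:scheme:semi} exactly as you describe. No gaps beyond those the paper itself leaves implicit.
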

\begin{proof}
We calculate
\begin{align}
    \delta_n^- J_{\mathrm{d}}(\vect{U}^{(n+1)}, \vect{U}^{(n)})
    &=
    \sum_{k=0}^K{}'' \frac{1}{2} \delta_n^- (\delta_n^+ U_k^{(n)})^2 \Delta x \\
    &\quad +
    \sum_{k=0}^{K-1} \frac{1}{2} \delta_n^-
    \left(
        \frac{(\delta_k^+ U_k^{(n+1)})^2 + (\delta_k^+ U_k^{(n)})^2}{2}
    \right)
    \Delta x \\
    &\quad + \sum_{k=0}^K{}'' \delta_n^- F(U_k^{(n+1)},U_k^{(n)}) \Delta x \\
    &\quad + 
    \frac{1}{2} \delta_n^- (\delta_n^+ U_0^{(n)})^2
    + \frac{1}{2} \delta_n^- (\delta_n^+ U_K^{(n)})^2.
\end{align}
We first note that
$\delta_n^- (\delta_n^+ U_k^{(n)})^2
= 2 \delta_n^{\langle 2\rangle}U_k^{(n)} \delta_n^{\langle 1 \rangle} U_k^{(n)}$.
The last two terms are also computed similarly.
Moreover, by the summation by parts \eqref{sbp},
the second term of the right-hand side is
calculated as
\begin{align}
    &\sum_{k=0}^{K-1} \frac{1}{2} \delta_n^-
    \left(
        \frac{(\delta_k^+ U_k^{(n+1)})^2 + (\delta_k^+ U_k^{(n)})^2}{2}
    \right)
    \Delta x \\
    &=
    \sum_{k=0}^{K-1} \frac{1}{2}
    \frac{(\delta_k^+ U_k^{(n+1)})^2 - (\delta_k^+ U_k^{(n-1)})^2}{2\Delta t} \Delta x \\
    &=
    \sum_{k=0}^{K-1} 
    \left(
    \frac{\delta_k^+ U_k^{(n+1)}+\delta_k^+ U_k^{(n-1)}}{2}
    \right)
    \delta_k^+ \delta_n^{\langle 1 \rangle} U_k^{(n)} \\
    &=
    - \sum_{k=0}^{K}{}''
        \delta_k^{\langle 2 \rangle}
        \left(
    \frac{U_k^{(n+1)} + U_k^{(n-1)}}{2}
    \right)
    \delta_n^{\langle 1 \rangle} U_k^{(n)} \\
    &\quad
    + \left[
    \mu_k^- \left(
    \frac{\delta_k^+ U_k^{(n+1)}+\delta_k^+ U_k^{(n-1)}}{2}
    \right)
    \delta_n^{\langle 1 \rangle} U_k^{(n)}
    \right]_{0}^K \\
    &=
    - \sum_{k=0}^{K}{}''
        \delta_k^{\langle 2 \rangle}
        \left(
    \frac{U_k^{(n+1)} + U_k^{(n-1)}}{2}
    \right)
    \delta_n^{\langle 1 \rangle} U_k^{(n)} \\
    &\quad
    + \delta_k^{\langle 1 \rangle}
    \left(
    \frac{U_K^{(n+1)} + U_K^{(n-1)}}{2}
    \right)
    \delta_n^{\langle 1 \rangle} U_K^{(n)}
    - \delta_k^{\langle 1 \rangle}
    \left(
    \frac{U_0^{(n+1)} + U_0^{(n-1)}}{2}
    \right)
    \delta_n^{\langle 1 \rangle} U_0^{(n)}.
\end{align}
We also compute
\begin{align}
    \sum_{k=0}^K{}'' \delta_n^- F(U_k^{(n+1)}, U_k^{(n)})\Delta x
    &= \sum_{k=0}^{K}{}''
    \frac{d(F,\loc)}{d(U_k^{(n+1)}, U_k^{(n)} : U_k^{(n)}, U_k^{(n-1)})} \delta_n^{\langle 1 \rangle} U_k^{(n)} \Delta x.
\end{align}
From them and the equations \eqref{eq:scheme:semi},
we conclude
\begin{align}
    &\delta_n^- J_{\mathrm{d}} (\vect{U}^{(n+1)}, \vect{U}^{(n)}) \\
    &=
    \sum_{k=0}^K{}''
    \left\{ 
    \delta_n^{\langle 2 \rangle} U_k^{(n)}
    - \delta_k^{\langle 2 \rangle}
        \left(
    \frac{U_k^{(n+1)} + U_k^{(n-1)}}{2}
    \right)
    \right. \\
    &\qquad \qquad \qquad \left.
    + \frac{d(F,\loc)}{d(U_k^{(n+1)}, U_k^{(n)} : U_k^{(n)}, U_k^{(n-1)})} 
         \right\}
         \delta_n^{\langle 1 \rangle} U_k^{(n)} \Delta x \\
    &\quad
    + \left\{
    \delta_n^{\langle 2 \rangle} U_0^{(n)}
    - \delta_k^{\langle 1 \rangle}
    \left(
    \frac{U_0^{(n+1)} + U_0^{(n-1)}}{2}
    \right)
    \right\}
    \delta_n^{\langle 1 \rangle} U_0^{(n)} \\
    &\quad
    + \left\{
    \delta_n^{\langle 2 \rangle} U_K^{(n)}
    + \delta_k^{\langle 1 \rangle}
    \left(
    \frac{U_K^{(n+1)} + U_K^{(n-1)}}{2}
    \right)
    \right\}
    \delta_n^{\langle 1 \rangle} U_K^{(n)} \\
    &= 0,
\end{align}
which completes the proof.
\end{proof}

\subsection{Examples}
\subsubsection{Semilinear wave equation}
We consider the semilinear wave equation
with a defocusing nonlinearity
$-u^3$.
\begin{align}\label{eq:sdw:example:-u^3}
    \left\{ \begin{aligned}[2]
    &\frac{\partial^2 u}{\partial t^2} = \frac{\partial^2 u}{\partial x^2} - u^3,
    &&(t,x) \in (0,T)\times (0,L),\\
    &\frac{\partial^2 u}{\partial t^2}(t,0)
    - \frac{\partial u}{\partial x}(t,0) = 0,
    &&t\in (0,T),\\
    &\frac{\partial^2 u}{\partial t^2}(t,L)
    + \frac{\partial u}{\partial x}(t,L) = 0,
    &&t\in (0,T).
    \end{aligned}\right.
\end{align}
The corresponding energy is given by
\begin{align}
    J(u(t)) = \int_0^L \left( 
    \frac{1}{2}
    \left( u_t(t,x)^2 + u_x(t,x)^2 \right) + \frac{u(t,x)^4}{4}
    \right) \,dx
    + \frac{1}{2}u_t(t,0)^2 + \frac{1}{2}u_t(t,L)^2
\end{align}
We define a function
$F: \mathbb{R}^2 \to \mathbb{R}$
by
\begin{align}
    F(a,b) := \frac{1}{2} \left( \frac{a^4}{4} + \frac{b^4}{4} \right)
\end{align}
and the discrete energy by
\begin{align}
    J_{\mathrm{d}} (\vect{U}^{(n+1)}, \vect{U}^{(n)})
    &=
    \sum_{k=0}^K{}'' \frac{1}{2} (\delta_n^+ U_k^{(n)})^2 \Delta x \\
    &\quad +
    \sum_{k=0}^{K-1} 
    \frac{1}{2}
    \left(
        \frac{(\delta_k^+ U_k^{(n+1)})^2 + (\delta_k^+ U_k^{(n)})^2}{2}
    \right)
    \Delta x \\
    &\quad + \sum_{k=0}^K{}'' \frac{1}{8}\left( (U_k^{(n+1)})^4 + (U_k^{(n)})^4 \right) \Delta x \\
    &\quad
    + \frac{1}{2} (\delta_n^+ U_0^{(n)})^2
    + \frac{1}{2} (\delta_n^+ U_K^{(n)})^2.
\end{align}
Then, we easily see
\begin{align}
    &\frac{d(F,\loc)}{d(U_k^{(n+1)}, U_k^{(n)} : U_k^{(n)}, U_k^{(n-1)})} \\
    &=
    \frac{1}{4}
    \left(
    (U_k^{(n+1)})^3 + (U_k^{(n+1)})^2 U_k^{(n-1)} + U_k^{(n+1)} (U_k^{(n-1)})^2 + (U_k^{(n-1)})^3
    \right),
\end{align}
and hence, the energy conservation scheme is given by
\begin{align}\label{eq:scheme:-u^3:sec3}
    \left\{
    \begin{alignedat}{1}
    &\delta_n^{\langle 2\rangle} U_k^{(n)}
    = \delta_k^{\langle 2 \rangle}
    \left(
        \frac{U_k^{(n+1)}+U_k^{(n-1)}}{2}
    \right) \\
    &\qquad \qquad - \frac{1}{4} \left(
    (U_k^{(n+1)})^3 + (U_k^{(n+1)})^2 U_k^{(n-1)}
    + U_k^{(n+1)} (U_k^{(n-1)})^2 + (U_k^{(n-1)})^3
    \right),\\
    &\delta_n^{\langle 2\rangle} U_0^{(n)}
        - \delta_k^{\langle 1 \rangle}
        \left(
            \frac{U_0^{(n+1)}+U_0^{(n-1)}}{2}
        \right)
        = 0,\\
    &\delta_n^{\langle 2\rangle} U_K^{(n)}
       + \delta_k^{\langle 1 \rangle}
        \left(
            \frac{U_K^{(n+1)}+U_K^{(n-1)}}{2}
        \right)
        = 0.
    \end{alignedat}
    \right.
\end{align}

\subsubsection{Sine-Gordon equation}
We consider the sine-Gordon equation
\begin{align}\label{eq:sdw:example:sineGordon}
    \left\{ \begin{aligned}[2]
    &\frac{\partial^2 u}{\partial t^2} = \frac{\partial^2 u}{\partial x^2} - \sin (u),
    &&(t,x) \in (0,T)\times (0,L),\\
    &\frac{\partial^2 u}{\partial t^2}(t,0)
    - \frac{\partial u}{\partial x}(t,0) = 0,
    &&t\in (0,T),\\
    &\frac{\partial^2 u}{\partial t^2}(t,L)
    + \frac{\partial u}{\partial x}(t,L) = 0,
    &&t\in (0,T).
    \end{aligned}\right.
\end{align}
The corresponding energy is defined by
\begin{align}
    J(u(t))
    &=
    \int_0^L \left(
    \frac{1}{2} \left( u_t(t,x)^2 + u_x(t,x)^2 \right)
    + (1-\cos (u(t,x)))
    \right)\,dx \\
    &\quad
    + \frac{1}{2}u_t(t,0)^2 + \frac{1}{2}u_t(t,L)^2.
\end{align}
We define a function
$F: \mathbb{R}^2 \to \mathbb{R}$
by
\begin{align}
    F(a,b) := \frac{1}{2} \left( 2- \cos a - \cos b \right)
\end{align}
and the discrete energy by
\begin{align}
    J_{\mathrm{d}} (\vect{U}^{(n+1)}, \vect{U}^{(n)})
    &=
    \sum_{k=0}^K{}'' \frac{1}{2} (\delta_n^+ U_k^{(n)})^2 \Delta x \\
    &\quad +
    \sum_{k=0}^{K-1} 
    \frac{1}{2}
    \left(
        \frac{(\delta_k^+ U_k^{(n+1)})^2 + (\delta_k^+ U_k^{(n)})^2}{2}
    \right)
    \Delta x \\
    &\quad + \sum_{k=0}^K{}'' \frac{1}{2}\left(2- \cos (U_k^{(n+1)}) - \cos (U_k^{(n)}) \right) \Delta x \\
    &\quad
    + \frac{1}{2} (\delta_n^+ U_0^{(n)})^2
    + \frac{1}{2} (\delta_n^+ U_K^{(n)})^2.
\end{align}
Then, by a straightforward calculation,
we have
\begin{align}
    \frac{d(F,\loc)}{d(U_k^{(n+1)}, U_k^{(n)} : U_k^{(n)}, U_k^{(n-1)})}
    &=
    \mathrm{sinc} \left( \frac{U_k^{(n+1)}-U_k^{(n-1)}}{2} \right)
    \sin \left( \frac{U_k^{(n+1)}+U_k^{(n-1)}}{2} \right),
\end{align}
where
$\mathrm{sinc}(x) := \frac{\sin x}{x}$
for $x\neq 0$ and
$\mathrm{sinc}(0) := 1$.
Hence, the energy conservation scheme is given by
\begin{align}\label{eq:scheme:sineGordon:sec3}
    \left\{
    \begin{alignedat}{1}
    &\delta_n^{\langle 2\rangle} U_k^{(n)}
    = \delta_k^{\langle 2 \rangle}
    \left(
        \frac{U_k^{(n+1)}+U_k^{(n-1)}}{2}
    \right) \\
    &\qquad \qquad 
    - \mathrm{sinc} \left( \frac{U_k^{(n+1)}-U_k^{(n-1)}}{2} \right)
    \sin \left( \frac{U_k^{(n+1)}+U_k^{(n-1)}}{2} \right),\\
    &\delta_n^{\langle 2\rangle} U_0^{(n)}
        - \delta_k^{\langle 1 \rangle}
        \left(
            \frac{U_0^{(n+1)}+U_0^{(n-1)}}{2}
        \right)
        = 0,\\
    &\delta_n^{\langle 2\rangle} U_K^{(n)}
       + \delta_k^{\langle 1 \rangle}
        \left(
            \frac{U_K^{(n+1)}+U_K^{(n-1)}}{2}
        \right)
        = 0.
    \end{alignedat}
    \right.
\end{align}

\subsection{General case}
We also give the energy conservation scheme
for more general nonlinear wave equations
with dynamic boundary conditions,
that is, the discrete version of the equations \eqref{eq:ndw}.
However, the existence of solutions and
error estimates are not discussed in this paper.
Let
$X_{\mathrm{d},k}: \mathbb{R}^2 \to \mathbb{R}$
and
$F: \mathbb{R}^2 \to \mathbb{R}$
be $C^1$ functions.
For
$\vect{U}^{(n)} = \{ U_k^{(n)} \}_{k=-1}^{K+1} \in \mathbb{R}^{K+3} \ (n=0,1,\ldots,N)$,
we define
discrete energy density functions
$G_{\mathrm{d},k}^+ (\vect{U}^{(n+1)}, \vect{U}^{(n)})$
and
$G_{\mathrm{d},k}^- (\vect{U}^{(n+1)}, \vect{U}^{(n)})$
by the form
\begin{align}\label{eq:gene:Gdk:+}
    G_{\mathrm{d},k}^+ (\vect{U}^{(n+1)}, \vect{U}^{(n)})
    &= X_{\mathrm{d},k}(\delta_k^+ U_k^{(n+1)}, \delta_k^+ U_k^{(n)}) + F(U_k^{(n+1)}, U_k^{(n)}),\\
\label{eq:gene:Gdk:-}
     G_{\mathrm{d},k}^- (\vect{U}^{(n+1)}, \vect{U}^{(n)})
    &= s_k^- \left( X_{\mathrm{d},k}(\delta_k^+ U_k^{(n+1)}, \delta_k^+ U_k^{(n)}) \right) + F(U_k^{(n+1)}, U_k^{(n)}).
\end{align}
Moreover, we define a discrete energy
$J_{\mathrm{d}}(\vect{U}^{(n+1)}, \vect{U}^{(n)})$
by
\begin{align}
    J_{\mathrm{d}}(\vect{U}^{(n+1)}, \vect{U}^{(n)})
    &= \sum_{k=0}^K{}'' \frac{1}{2} (\delta_n^+ U_k^{(n)})^2 \Delta x \\
    &\quad + 
    \frac{1}{2} \sum_{k=0}^{K-1}
    G_{\mathrm{d},k}^+(\vect{U}^{(n+1)}, \vect{U}^{(n)}) \Delta x
    + \frac{1}{2} \sum_{k=1}^K
    G_{\mathrm{d},k}^- (\vect{U}^{(n+1)}, \vect{U}^{(n)}) \Delta x \\
    &\quad
    + \frac{1}{2} (\delta_n^+ U_0^{(n)})^2
    + \frac{1}{2} (\delta_n^+ U_K^{(n)})^2.
\end{align}
From \eqref{eq:gene:Gdk:+} and \eqref{eq:gene:Gdk:-},
$J_{\mathrm{d}}(\vect{U}^{(n+1)}, \vect{U}^{(n)})$
is also written as
\begin{align}
    J_{\mathrm{d}}(\vect{U}^{(n+1)}, \vect{U}^{(n)})
    &=
    \sum_{k=0}^K{}'' \frac{1}{2} (\delta_n^+ U_k^{(n)})^2 \Delta x \\
    &\quad +
    \sum_{k=0}^{K-1} X_{\mathrm{d},k} (\delta_k^+ U_k^{(n+1)}, \delta_k^+ U_k^{(n)}) \Delta x
    + \sum_{k=0}^K{}'' F(U_k^{(n+1)}, U_k^{(n)}) \Delta x \\
    &\quad
    + \frac{1}{2} (\delta_n^+ U_0^{(n)})^2
    + \frac{1}{2} (\delta_n^+ U_K^{(n)})^2.
\end{align}
Our difference scheme which
preserves the energy
$J_{\mathrm{d}}(\vect{U}^{(n+1)}, \vect{U}^{(n)})$
is the following.
\begin{align}\label{eq:scheme:general}
    \left\{
    \begin{alignedat}{2}
    &\delta_n^{\langle 2\rangle} U_k^{(n)}
    = \delta_k^- \left(
        \frac{d(X_{\mathrm{d},k},\loc)}{d(\delta_k^+U_k^{(n+1)}, \delta_k^+ U_k^{(n)} : \delta_k^+ U_k^{(n)}, \delta_k^+ U_k^{(n-1)})}
        \right) & \ \\
    &\qquad \qquad \ -
        \frac{d(F,\loc)}{d(U_k^{(n+1)},U_k^{(n)} : U_k^{(n)}, U_k^{(n-1)})},
        \qquad (k=0,\ldots,K, n=1,\ldots,N-1),\\
    &\delta_n^{\langle 2\rangle} U_0^{(n)}
        - \mu_k^- \left(
            \frac{d(X_{\mathrm{d},0},\loc)}{d(\delta_k^+U_0^{(n+1)}, \delta_k^+ U_0^{(n)} : \delta_k^+ U_0^{(n)}, \delta_k^+ U_0^{(n-1)})}
            \right)
            = 0,
        \qquad (n=1,\ldots,N-1),\\
    &\delta_n^{\langle 2\rangle} U_K^{(n)}
       + \mu_k^- \left(
            \frac{d(X_{\mathrm{d},K},\loc)}{d(\delta_k^+U_K^{(n+1)}, \delta_k^+ U_K^{(n)} : \delta_k^+ U_K^{(n)}, \delta_k^+ U_K^{(n-1)})}
            \right)
            = 0,
        \qquad (n=1,\ldots,N-1).
    \end{alignedat}
    \right.
\end{align}
The proposed scheme above preserves the energy:
\begin{theorem}\label{thm:energy:conserv}
If
$\vect{U}^{(n)} = \{ U_k^{(n)} \}_{k=-1}^{K+1} \in \mathbb{R}^{K+3} \ (n=0,1,\ldots,N)$
satisfies the scheme \eqref{eq:scheme:general},
then
$\delta_n^- J_{\mathrm{d}}(\vect{U}^{(n+1)}, \vect{U}^{(n)}) = 0$
for all
$n =1,\ldots,N-1$.
\end{theorem}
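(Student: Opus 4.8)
The plan is to follow the same strategy as in the proof of Theorem~\ref{thm:energy:conserv:semi}, replacing the explicit quadratic gradient term by the general density $X_{\mathrm{d},k}$ and using the four-point difference quotient to handle its time difference. First I would apply $\delta_n^-$ to the four groups of terms comprising $J_{\mathrm{d}}(\vect{U}^{(n+1)}, \vect{U}^{(n)})$: the interior kinetic sum, the gradient-energy sum $\sum_{k=0}^{K-1} X_{\mathrm{d},k}\,\Delta x$, the potential sum $\sum_{k=0}^{K}{}'' F\,\Delta x$, and the two boundary kinetic terms. For both kinetic contributions I would use $\delta_n^-(\delta_n^+ U_k^{(n)})^2 = 2\,\delta_n^{\langle 2\rangle} U_k^{(n)}\,\delta_n^{\langle 1\rangle} U_k^{(n)}$ exactly as before, turning them into $\delta_n^{\langle 2\rangle} U_k^{(n)}\,\delta_n^{\langle 1\rangle} U_k^{(n)}$ factors at the interior nodes and at $k=0,K$.

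The new ingredient is the treatment of the two nonlinear densities through the four-point difference quotient. For the potential $F$, since $\loc(U_k^{(n+1)}, U_k^{(n)}) - \loc(U_k^{(n)}, U_k^{(n-1)}) = \tfrac12(U_k^{(n+1)}-U_k^{(n-1)}) = \Delta t\,\delta_n^{\langle 1\rangle} U_k^{(n)}$, the definition of the quotient gives
\begin{align*}
\delta_n^- F(U_k^{(n+1)}, U_k^{(n)})
= \frac{d(F,\loc)}{d(U_k^{(n+1)}, U_k^{(n)} : U_k^{(n)}, U_k^{(n-1)})}\,\delta_n^{\langle 1\rangle} U_k^{(n)},
\end{align*}
just as in the semilinear case. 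For the gradient density I would establish the analogous identity: because $\loc(\delta_k^+ U_k^{(n+1)}, \delta_k^+ U_k^{(n)}) - \loc(\delta_k^+ U_k^{(n)}, \delta_k^+ U_k^{(n-1)}) = \tfrac12(\delta_k^+ U_k^{(n+1)} - \delta_k^+ U_k^{(n-1)}) = \Delta t\,\delta_k^+\delta_n^{\langle 1\rangle} U_k^{(n)}$, the same computation yields
\begin{align*}
\delta_n^- X_{\mathrm{d},k}(\delta_k^+ U_k^{(n+1)}, \delta_k^+ U_k^{(n)})
= \frac{d(X_{\mathrm{d},k},\loc)}{d(\delta_k^+ U_k^{(n+1)}, \delta_k^+ U_k^{(n)} : \delta_k^+ U_k^{(n)}, \delta_k^+ U_k^{(n-1)})}\,\delta_k^+\delta_n^{\langle 1\rangle} U_k^{(n)}.
\end{align*}

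Writing $W_k$ for the four-point quotient of $X_{\mathrm{d},k}$ appearing above, the gradient-energy term becomes $\sum_{k=0}^{K-1} W_k\,\delta_k^+\delta_n^{\langle 1\rangle} U_k^{(n)}\,\Delta x$, to which I would apply the summation by parts formula \eqref{sbp} with $f_k = W_k$ and $g_k = \delta_n^{\langle 1\rangle} U_k^{(n)}$. This produces the interior sum $-\sum_{k=0}^{K}{}''(\delta_k^- W_k)\,\delta_n^{\langle 1\rangle} U_k^{(n)}\,\Delta x$ together with the boundary term $\bigl[(\mu_k^- W_k)\,\delta_n^{\langle 1\rangle} U_k^{(n)}\bigr]_0^K$. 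Collecting all interior contributions, each node $k$ carries the factor $\delta_n^{\langle 2\rangle} U_k^{(n)} - \delta_k^- W_k + \frac{d(F,\loc)}{d(\cdots)}$, which is exactly the bulk equation of \eqref{eq:scheme:general} and hence vanishes; the surviving boundary contributions $(\mu_k^- W_K)\,\delta_n^{\langle 1\rangle} U_K^{(n)} - (\mu_k^- W_0)\,\delta_n^{\langle 1\rangle} U_0^{(n)}$ combine with the two boundary kinetic factors so that the coefficients of $\delta_n^{\langle 1\rangle} U_0^{(n)}$ and $\delta_n^{\langle 1\rangle} U_K^{(n)}$ are precisely the left-hand sides of the boundary equations in \eqref{eq:scheme:general}, which also vanish. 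Therefore $\delta_n^- J_{\mathrm{d}} = 0$.

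The main obstacle is the verification of the gradient-density identity together with the bookkeeping of the boundary terms: one must check that the discrete chain rule through $\delta_k^+$ produces, after summation by parts, exactly the mean-operator form $\mu_k^-(\cdots)$ at $k=0$ and $k=K$, so that the \emph{boundary} equations of the scheme---rather than the interior equation---are what cancel the boundary kinetic factors. One should also keep in mind that the trapezoidal weight $\sum''$ makes the bulk equation at the endpoints responsible for the half-weighted interior terms there, while the two separate boundary equations dispose of the leftover edge terms. Once the two four-point quotient identities are in place, the remaining algebra is identical in structure to the semilinear proof.
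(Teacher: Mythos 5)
Your proposal is correct and follows essentially the same route as the paper's proof: the same decomposition of $\delta_n^- J_{\mathrm{d}}$, the same kinetic identity $\delta_n^-(\delta_n^+ U_k^{(n)})^2 = 2\,\delta_n^{\langle 2\rangle} U_k^{(n)}\,\delta_n^{\langle 1\rangle} U_k^{(n)}$, the same four-point difference quotient identities for $F$ and $X_{\mathrm{d},k}$, and the same application of the summation by parts formula \eqref{sbp}, after which the interior and boundary factors vanish by the respective equations of \eqref{eq:scheme:general}. The only difference is presentational: you justify the quotient identities explicitly via the observation $\loc(a,b)-\loc(c,d) = \Delta t\,\delta_n^{\langle 1\rangle} U_k^{(n)}$, which the paper leaves implicit by referring back to Theorem \ref{thm:energy:conserv:semi}.
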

\begin{proof}
We compute
\begin{align}
    \delta_n^- J_{\mathrm{d}}(\vect{U}^{(n+1)}, \vect{U}^{(n)})
    &=
    \sum_{k=0}^K{}'' \frac{1}{2} \delta_n^- (\delta_n^+ U_k^{(n)})^2 \Delta x \\
    &\quad +
    \sum_{k=0}^{K-1} \delta_n^- X_{\mathrm{d},k} (\delta_k^+ U_k^{(n+1)}, \delta_k^+U_k^{(n)}) \Delta x
    + \sum_{k=0}^K{}'' \delta_n^- F(U_k^{(n+1)},U_k^{(n)}) \Delta x \\
    &\quad + 
    \frac{1}{2} \delta_n^- (\delta_n^+ U_0^{(n)})^2
    + \frac{1}{2} \delta_n^- (\delta_n^+ U_K^{(n)})^2.
\end{align}
By the summation by parts,
the second term of the right-hand side is further calculated as
\begin{align}
    &\sum_{k=0}^{K-1} \delta_n^- X_{\mathrm{d},k} (\delta_k^+ U_k^{(n+1)}, \delta_k^+U_k^{(n)}) \Delta x \\
    &=
    \sum_{k=0}^{K-1}
    \frac{d(X_{\mathrm{d},k}, \loc)}{d(\delta_k^+ U_k^{(n+1)}, \delta_k^+ U_k^{(n)} : \delta_k^+ U_k^{(n)}, \delta_k^+ U_k^{(n-1)})}
    \delta_k^+ (\delta_n^{\langle 1 \rangle} U_k^{(n)}) \Delta x \\
    &=
    - \sum_{k=0}^{K}{}''
    \left\{
        \delta_k^- \left(
            \frac{d(X_{\mathrm{d},k}, \loc)}{d(\delta_k^+ U_k^{(n+1)}, \delta_k^+ U_k^{(n)} : \delta_k^+ U_k^{(n)}, \delta_k^+ U_k^{(n-1)})}
            \right)
        \delta_n^{\langle 1 \rangle} U_k^{(n)}
    \right\} \Delta x \\
    &\quad
    + \left[
    \mu_k^- \left(
        \frac{d(X_{\mathrm{d},k}, \loc)}{d(\delta_k^+ U_k^{(n+1)}, \delta_k^+ U_k^{(n)} : \delta_k^+ U_k^{(n)}, \delta_k^+ U_k^{(n-1)})}
        \right)
        \delta_n^{\langle 1 \rangle} U_k^{(n)}
    \right]_0^K.
\end{align}
The other terms can be computed in the same way
as in the proof of Theorem \ref{thm:energy:conserv:semi}.
From them and the equations \eqref{eq:scheme:general},
we conclude
\begin{align}
    &\delta_n^- J_{\mathrm{d}} (\vect{U}^{(n+1)}, \vect{U}^{(n)}) \\
    &=
    \sum_{k=0}^K{}''
    \left\{ 
    \delta_n^{\langle 2 \rangle} U_k^{(n)}
    + \frac{d(F,\loc)}{d(U_k^{(n+1)}, U_k^{(n)} : U_k^{(n)}, U_k^{(n-1)})} 
    \right. \\
    &\quad \quad \left.
    - \delta_k^- \left(
        \frac{d(X_{\mathrm{d},k}, \loc)}{d(\delta_k^+ U_k^{(n+1)}, \delta_k^+ U_k^{(n)} : \delta_k^+ U_k^{(n)}, \delta_k^+ U_k^{(n-1)})}
         \right)
         \right\}
         \delta_n^{\langle 1 \rangle} U_k^{(n)} \Delta x \\
    &\quad
    + \left\{
    \delta_n^{\langle 2 \rangle} U_0^{(n)}
    - \mu_k^- \left(
         \frac{d(X_{\mathrm{d},0}, \loc)}{d(\delta_k^+ U_0^{(n+1)}, \delta_k^+ U_0^{(n)} : \delta_k^+ U_0^{(n)}, \delta_k^+ U_0^{(n-1)})}
    \right)
    \right\}
    \delta_n^{\langle 1 \rangle} U_0^{(n)} \\
    &\quad
    + \left\{
    \delta_n^{\langle 2 \rangle} U_K^{(n)}
    + \mu_k^- \left(
         \frac{d(X_{\mathrm{d},K}, \loc)}{d(\delta_k^+ U_K^{(n+1)}, \delta_k^+ U_K^{(n)} : \delta_k^+ U_K^{(n)}, \delta_k^+ U_K^{(n-1)})}
    \right)
    \right\}
    \delta_n^{\langle 1 \rangle} U_K^{(n)} \\
    &= 0,
\end{align}
which completes the proof.
\end{proof}

\subsection{Example}
We consider the quasilinear string vibration equation
\begin{align}\label{eq:general:example:string}
    \left\{ \begin{aligned}[2]
    &\frac{\partial^2 u}{\partial t^2}
    = \frac{\partial}{\partial x}
    \left( \frac{u_x}{\sqrt{1+ u_x^2}} \right),
    &&(t,x) \in (0,T)\times (0,L),\\
    &\frac{\partial^2 u}{\partial t^2}(t,0)
    - \frac{u_x(t,0)}{\sqrt{1+ u_x(t,0)^2}} = 0,
    &&t\in (0,T),\\
    &\frac{\partial^2 u}{\partial t^2}(t,L)
    + \frac{u_x(t,L)}{\sqrt{1+ u_x(t,L)^2}} = 0,
    &&t\in (0,T).
    \end{aligned}\right.
\end{align}
The corresponding energy is defined by
\begin{align}
    J(u(t))
    &=
    \int_0^L \left(
    \frac{1}{2} u_t(t,x)^2
    + \sqrt{1+u_x(t,x)^2}
    \right)\,dx
    + \frac{1}{2}u_t(t,0)^2 + \frac{1}{2}u_t(t,L)^2.
\end{align}
We define a function
$X_{\mathrm{d},k}(a,b): \mathbb{R}^2 \to \mathbb{R}$
by
\begin{align}
    X_{\mathrm{d},k}(a,b)
    := \frac{1}{2} \left( \sqrt{1+a^2} + \sqrt{1+b^2} \right),
\end{align}
and the discrete energy by
\begin{align}
    J_{\mathrm{d}} (\vect{U}^{(n+1)}, \vect{U}^{(n)})
    &=
    \sum_{k=0}^K{}'' \frac{1}{2} (\delta_n^+ U_k^{(n)})^2 \Delta x \\
    &\quad +
    \sum_{k=0}^{K-1} 
    \frac{1}{2}
    \left(
        \sqrt{1 + (\delta_k^+ U_k^{(n+1)})^2}
        + \sqrt{1+ (\delta_k^+ U_k^{(n)})^2}
    \right)
    \Delta x \\
    &\quad
    + \frac{1}{2} (\delta_n^+ U_0^{(n)})^2
    + \frac{1}{2} (\delta_n^+ U_K^{(n)})^2.
\end{align}
Then, we compute
\begin{align}
    &\frac{d(X_{\mathrm{d},k}, \loc)}{d(\delta_k^+ U_k^{(n+1)}, \delta_k^+ U_k^{(n)} : \delta_k^+ U_k^{(n)}, \delta_k^+ U_k^{(n-1)})}
    =
    \frac{\delta_k^+ U_k^{(n+1)}+\delta_k^+ U_k^{(n-1)}}{
    \sqrt{1+(\delta_k^+ U_k^{(n+1)})^2}
    + \sqrt{1+(\delta_k^+ U_k^{(n-1)})^2}
    },
\end{align}
and hence, the energy conservation scheme is given by
\begin{align}\label{eq:gene:scheme:string}
    \left\{
    \begin{alignedat}{1}
    &\delta_n^{\langle 2\rangle} U_k^{(n)}
    =
    \delta_k^- \left(
    \frac{\delta_k^+ U_k^{(n+1)}+\delta_k^+ U_k^{(n-1)}}{
    \sqrt{1+(\delta_k^+ U_k^{(n+1)})^2}
    + \sqrt{1+(\delta_k^+ U_k^{(n-1)})^2}
    }
    \right)
    ,\\
    &\delta_n^{\langle 2\rangle} U_0^{(n)}
        - \mu_k^-
        \left(
            \frac{\delta_k^+ U_0^{(n+1)}+\delta_k^+ U_0^{(n-1)}}{
    \sqrt{1+(\delta_k^+ U_0^{(n+1)})^2}
    + \sqrt{1+(\delta_k^+ U_0^{(n-1)})^2}
    }
        \right)
        = 0,\\
    &\delta_n^{\langle 2\rangle} U_K^{(n)}
       + \mu_k^-
        \left(
            \frac{\delta_k^+ U_K^{(n+1)}+\delta_k^+ U_K^{(n-1)}}{
    \sqrt{1+(\delta_k^+ U_K^{(n+1)})^2}
    + \sqrt{1+(\delta_k^+ U_K^{(n-1)})^2}
    }
        \right)
        = 0.
    \end{alignedat}
    \right.
\end{align}

\section{Existence of solutions}
In this section, we prove the existence of
the solution for the difference scheme derived in
Section 3
in the case where the equation is semilinear, 
that is,
\begin{align}\label{eq:sdw:sec5}
    \left\{ \begin{aligned}[2]
    &\frac{\partial^2 u}{\partial t^2} = \frac{\partial^2 u}{\partial x^2} - \tilde{F}'(u),
    &&(t,x) \in (0,T)\times (0,L),\\
    &\frac{\partial^2 u}{\partial t^2}(t,0)
    - \frac{\partial u}{\partial x}(t,0) = 0,
    &&t\in (0,T),\\
    &\frac{\partial^2 u}{\partial t^2}(t,L)
    + \frac{\partial u}{\partial x}(t,L) = 0,
    &&t\in (0,T),
    \end{aligned}\right.
\end{align}
where $\tilde{F}$ is sufficiently smooth. 
For simplicity, we assume that the function 
$F: \mathbb{R}^2 \to \mathbb{R}$
has the form
\begin{align}\label{eq:F:sec5}
    F(a,b) := \frac{1}{2} \left( \tilde{F}(a) + \tilde{F}(b) \right).
\end{align}
Under the assumption, the four-point difference quotient is rewritten into 
the two-point difference quotient as follows: 
\begin{align*}
    \frac{d(F,\loc)}{d(U_k^{(n+1)},U_k^{(n)} : U_k^{(n)}, U_k^{(n-1)})} &= \frac{d \tilde{F}}{d(U_k^{(n+1)}, U_k^{(n-1)})} \\  
    &=\begin{dcases} 
    \frac{\tilde{F}(U_k^{(n+1)}) - \tilde{F}(U_k^{(n-1)})}{U_k^{(n+1)} -  U_k^{(n-1)}} &(U_k^{(n+1)} \neq U_k^{(n-1)}), \\
    \tilde{F}'(U_k^{(n-1)}) &(U_k^{(n+1)} =  U_k^{(n-1)}).
    \end{dcases}
\end{align*}
Hence the scheme \eqref{eq:scheme:semi} is rewritten as 
\begin{align}\label{eq:scheme:semi:sec4}
    \left\{
    \begin{alignedat}{1}
    &\delta_n^{\langle 2\rangle} U_k^{(n)}
    = \delta_k^{\langle 2 \rangle}
    \left(
        \frac{U_k^{(n+1)}+U_k^{(n-1)}}{2}
    \right)
    - \frac{d \tilde{F}}{d(U_k^{(n+1)}, U_k^{(n-1)})},\\
    &\delta_n^{\langle 2\rangle} U_0^{(n)}
        - \delta_k^{\langle 1 \rangle}
        \left(
            \frac{U_0^{(n+1)}+U_0^{(n-1)}}{2}
        \right)
        = 0,\\
    &\delta_n^{\langle 2\rangle} U_K^{(n)}
       + \delta_k^{\langle 1 \rangle}
        \left(
            \frac{U_K^{(n+1)}+U_K^{(n-1)}}{2}
        \right)
        = 0,
    \end{alignedat}
    \right.
\end{align}
where
$k=0,\ldots,K$
and
$n=1,\ldots,N-1$.
Although, a mathematical treatment for the structure-preserving scheme of quasilinear equations is studied in \cite{yo-ka}, 
it is restricted the problem to semi-discrete case under simple boundary conditions. 
As is shown in also e.g. \cite{ko-lu}, it still remains some difficult to treat quasilinear equations even if under 
basic boundary conditions. 
Therefore, in what follows,
we focus our analysis on the
semilinear problem \eqref{eq:scheme:semi:sec4}.

Denote
$\vect{U} = \{ U_k \}_{k=0}^{K},
\widetilde{\vect{U}} = \{ \widetilde{U}_k \}_{k=0}^{K}
\in \mathbb{R}^{K+1}$.
For given vectors
$\vect{U}^{(n)} = \{ U_k^{(n)} \}_{k=-1}^{K+1},
\vect{U}^{(n-1)} = \{ U_k^{(n-1)} \}_{k=-1}^{K+1}
\in \mathbb{R}^{K+3}$,
let us define the nonlinear mapping $\Phi:\vect{U} \mapsto \widetilde{\vect{U}}$ by 
\begin{align}\label{eq:nmp}
    \left\{
    \begin{alignedat}{1}
    &\frac{\widetilde{U}_k - 2 U_k^{(n)} +  U_k^{(n-1)}}{(\Delta t)^2}  
    = \delta_k^{\langle 2 \rangle}
    \left(
        \frac{\widetilde{U}_k+U_k^{(n-1)}}{2}
    \right)
    - \frac{\widetilde{U}_k+U_k^{(n-1)}}{2} \\
    &\ \qquad \qquad \qquad \qquad \qquad + \frac{U_k+U_k^{(n-1)}}{2}
    - \frac{d \tilde{F}}{d(U_k, U_k^{(n-1)})},\\
    &\frac{\widetilde{U}_0 - 2 U_0^{(n)} +  U_0^{(n-1)}}{(\Delta t)^2} 
        - \delta_k^{\langle 1 \rangle}
        \left(
            \frac{\widetilde{U}_0+U_0^{(n-1)}}{2}
        \right)
        = 0,\\
    &\frac{\widetilde{U}_K - 2 U_K^{(n)} +  U_K^{(n-1)}}{(\Delta t)^2} 
       + \delta_k^{\langle 1 \rangle}
        \left(
            \frac{\widetilde{U}_K+U_K^{(n-1)}}{2}
        \right)
        = 0,
    \end{alignedat}
    \right.
\end{align}
where
$k=0,\ldots,K$.
We remark that,
the above scheme includes
the artificial terms
$\widetilde{U}_{-1}$
and
$\widetilde{U}_{K+1}$,
while they are determined from
$\{ \widetilde{U}_k \}_{k=0}^{K}$
as in the following argument.

We shall prove the mapping is contractive in 
\[
X_M := \{ \vect{U} \in \mathbb{R}^{K+1} \mid \| (\vect{U}, \vect{U} - \vect{U}^{(n)}) \|_X^2 \leq 3 M_n^2 \}, 
\]
where
$M_n := \| (\vect{U}^{(n-1)}, \vect{U}^{(n)}-\vect{U}^{(n-1)}) \|_X$
and 
\[
\| (\vect{U}, \vect{V}) \|_X := \sqrt{\| \vect{U} \|_{H^1}^2 + \frac{1}{(\Delta t)^2} \| \vect{V} \|_{2}^2
+ \frac{1}{(\Delta t)^2}( |V_0|^2 + |V_K|^2 )
}. 
\]

\begin{theorem}[Local existence]\label{thm:exist:sol}
Assume that 
$\tilde{F} \in C^2$.
Let
$n \in \mathbb{N}$.
For any given
$\{(U_k^{(n)}, U_k^{(n-1)}) \}_{k=-1}^{K+1}$,
there exists a constant
$R_1 = R_1(M_n) > 0$
such that for
$\Delta t < R_1$
there exists a unique solution 
$\{U_k^{(n+1)}\}_{k=-1}^{K+1}$
for \eqref{eq:scheme:semi:sec4}
in $X_M$. 
\end{theorem}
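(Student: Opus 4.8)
The plan is to recast the scheme \eqref{eq:scheme:semi:sec4} as a fixed-point problem for the map $\Phi$ of \eqref{eq:nmp} and apply Banach's fixed-point theorem on the closed set $X_M$. First I would note that a fixed point $\vect{U} = \widetilde{\vect{U}}$ of $\Phi$ is exactly a solution $\vect{U}^{(n+1)} = \widetilde{\vect{U}}$ of \eqref{eq:scheme:semi:sec4}: when $\widetilde{\vect{U}} = \vect{U}$ the two artificial terms $-\tfrac{\widetilde{U}_k + U_k^{(n-1)}}{2}$ and $+\tfrac{U_k + U_k^{(n-1)}}{2}$ cancel and the remaining identity is the scheme. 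The role of those terms is precisely to make the linear operator acting on $\widetilde{\vect{U}}$ coercive while leaving the fixed-point equation unchanged. Next I would check that $\Phi$ is well defined, i.e. that for each input $\vect{U}$ the system \eqref{eq:nmp} has a unique $\widetilde{\vect{U}}$: the interior map $\widetilde{\vect{U}} \mapsto \bigl[\tfrac{1}{(\Delta t)^2} + \tfrac12\bigr]\widetilde{\vect{U}} - \tfrac12 \delta_k^{\langle 2\rangle}\widetilde{\vect{U}}$ together with the two boundary rows is associated with a symmetric positive-definite form (testing the homogeneous system with $\widetilde{\vect{U}}$ and using \eqref{sbp} with the boundary rows forces $\widetilde{\vect{U}} = \vect{0}$), so the finite-dimensional linear system is uniquely solvable and the ghost values $\widetilde{U}_{-1}, \widetilde{U}_{K+1}$ are recovered from the boundary rows.

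The core of the argument is the contraction estimate. Given $\vect{U}, \vect{V} \in X_M$ I would set $\widetilde{\vect{U}} = \Phi(\vect{U})$, $\widetilde{\vect{V}} = \Phi(\vect{V})$ and $\vect{W} := \widetilde{\vect{U}} - \widetilde{\vect{V}}$, and subtract the two copies of \eqref{eq:nmp}; everything depending only on the data $\vect{U}^{(n)}, \vect{U}^{(n-1)}$ drops out, leaving in the interior
\begin{align*}
\tfrac{1}{(\Delta t)^2}W_k + \tfrac12 W_k - \tfrac12 \delta_k^{\langle 2\rangle}W_k = \tfrac{U_k - V_k}{2} - \Bigl( \tfrac{d\tilde{F}}{d(U_k, U_k^{(n-1)})} - \tfrac{d\tilde{F}}{d(V_k, U_k^{(n-1)})} \Bigr),
\end{align*}
with homogeneous boundary rows $\tfrac{1}{(\Delta t)^2}W_0 - \tfrac12 \delta_k^{\langle 1\rangle}W_0 = 0$ and $\tfrac{1}{(\Delta t)^2}W_K + \tfrac12 \delta_k^{\langle 1\rangle}W_K = 0$. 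Testing the interior equation against $W_k$ with the trapezoidal sum $\sum_{k=0}^K{}'' \cdots \Delta x$ and applying \eqref{sbp} turns $-\tfrac12\sum{}'' (\delta_k^{\langle 2\rangle}W_k)W_k \Delta x$ into $\tfrac12\|D\vect{W}\|^2 + \tfrac12(\delta_k^{\langle 1\rangle}W_0)W_0 - \tfrac12(\delta_k^{\langle 1\rangle}W_K)W_K$; the boundary rows give $\tfrac12\delta_k^{\langle 1\rangle}W_0 = \tfrac{1}{(\Delta t)^2}W_0$ and $\tfrac12\delta_k^{\langle 1\rangle}W_K = -\tfrac{1}{(\Delta t)^2}W_K$, converting these fluxes into $\tfrac{1}{(\Delta t)^2}(|W_0|^2 + |W_K|^2)$. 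Hence the left-hand side equals $\tfrac{1}{(\Delta t)^2}\|\vect{W}\|_2^2 + \tfrac{1}{(\Delta t)^2}(|W_0|^2 + |W_K|^2) + \tfrac12\|\vect{W}\|_{H^1}^2 \ge \tfrac12\|(\vect{W}, \vect{W})\|_X^2$. For the right-hand side I would use Cauchy--Schwarz together with Lemma \ref{prop-f}(2), which yields $\|\tfrac{d\tilde{F}}{d(\vect{U}, \vect{U}^{(n-1)})} - \tfrac{d\tilde{F}}{d(\vect{V}, \vect{U}^{(n-1)})}\|_2 \le \tfrac12 C_{\tilde{F}''}(C_S R)\|\vect{U} - \vect{V}\|_2$ with $R$ bounded uniformly by a multiple of $M_n$ over $X_M$, so the right-hand side is at most $C(M_n)\|\vect{U} - \vect{V}\|_2\|\vect{W}\|_2$. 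Young's inequality absorbs $\tfrac{1}{2(\Delta t)^2}\|\vect{W}\|_2^2$ into the left-hand side and gives $\|(\vect{W}, \vect{W})\|_X \le \Delta t\, C(M_n)\, d(\vect{U}, \vect{V})$, where $d(\vect{U}, \vect{V}) := \|(\vect{U} - \vect{V}, \vect{U} - \vect{V})\|_X$; this is a contraction once $\Delta t\, C(M_n) < 1$.

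It remains to verify $\Phi(X_M) \subseteq X_M$, where I write $\mathcal{N}(\vect{U}) := \|(\vect{U}, \vect{U} - \vect{U}^{(n)})\|_X$ for the radius functional defining $X_M$. For this I would run the analogous energy estimate on the full system \eqref{eq:nmp}, testing now against the discrete velocity $\widetilde{U}_k - U_k^{(n-1)}$: the identity $(\widetilde{U}_k - 2U_k^{(n)} + U_k^{(n-1)})(\widetilde{U}_k - U_k^{(n-1)}) = (\widetilde{U}_k - U_k^{(n)})^2 - (U_k^{(n)} - U_k^{(n-1)})^2$ and its spatial counterpart reproduce the inherited telescoping energy structure, again with the endpoint fluxes absorbed by the boundary rows, so that $\mathcal{N}(\widetilde{\vect{U}})^2$ is controlled by $M_n^2$ plus lower-order terms carrying a factor $\Delta t$ (from $\widetilde{\vect{U}} - \vect{U}^{(n-1)}$ and from the nonlinearity, estimated via Lemma \ref{prop-f}(1)). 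Shrinking $\Delta t$ makes these terms small, and the slack factor $3$ in the definition of $X_M$ secures $\mathcal{N}(\widetilde{\vect{U}})^2 \le 3M_n^2$. Choosing $R_1(M_n)$ small enough that both the contraction condition and the invariance hold, $(X_M, d)$ is a complete metric space and $\Phi$ is a contraction, so Banach's theorem gives a unique fixed point $\vect{U}^{(n+1)} \in X_M$, the desired solution. I expect the main obstacle to be the boundary bookkeeping: checking that the summation-by-parts endpoint fluxes at $k = 0, K$ are converted, with the correct signs, into exactly the boundary kinetic contributions $\tfrac{1}{(\Delta t)^2}(|W_0|^2 + |W_K|^2)$ of the $X$-norm, and arranging that the self-map and contraction estimates close simultaneously under a single threshold $R_1(M_n)$.
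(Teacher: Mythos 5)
Your proposal is correct and follows essentially the same route as the paper: well-definedness of $\Phi$ via positive definiteness of the linear system, the self-map estimate by testing \eqref{eq:nmp} against the discrete velocity $\widetilde{U}_k - U_k^{(n-1)}$ with Lemma \ref{prop-f}(1), the contraction estimate by testing the subtracted system against the output difference with Lemma \ref{prop-f}(2), and Banach's fixed-point theorem with $R_1(M_n)$ taken as the minimum of the two thresholds. The only cosmetic difference is that you prove invertibility of the linear system by a direct coercivity (summation-by-parts) argument, whereas the paper eliminates the ghost values, writes the matrix form \eqref{eq:scheme:vec}, and cites \cite{5} for the negative definiteness of $A$ — the same underlying fact.
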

\begin{proof}
Let us first show the mapping is well-defined.   
We rewrite the above difference scheme in
the vector form. 
We first obtain from
$\eqref{eq:nmp}_2$ and
$\eqref{eq:nmp}_3$ that 
\begin{gather*}
\frac{\widetilde{U}_{-1}+U_{-1}^{(n-1)}}{2}
= \frac{\widetilde{U}_1+U_1^{(n-1)}}{2}
- 2 \Delta x \frac{\widetilde{U}_0 - 2 U_0^{(n)} +  U_0^{(n-1)}}{(\Delta t)^2}, \\
\frac{\widetilde{U}_{K+1}+U_{K+1}^{(n-1)}}{2}
= \frac{\widetilde{U}_{K-1}+U_{K-1}^{(n-1)}}{2}
- 2 \Delta x \frac{\widetilde{U}_K - 2 U_K^{(n)} +  U_K^{(n-1)}}{(\Delta t)^2}. 
\end{gather*}
Substituting these into $0$-th and $K$-th equations of $\eqref{eq:nmp}_1$ yields 
\begin{gather*}
\frac{\widetilde{U}_0 - 2 U_0^{(n)} +  U_0^{(n-1)}}{(\Delta t)^2}  
    = \frac{2}{(\Delta x)^2}
    \left(
        \frac{\widetilde{U}_1+U_1^{(n-1)}}{2}
        -  \frac{\widetilde{U}_0+U_0^{(n-1)}}{2} \right)
    - \frac{\widetilde{U}_0+U_0^{(n-1)}}{2} \\
    \qquad \qquad - \frac{2}{\Delta x} \cdot \frac{\widetilde{U}_0 - 2 U_0^{(n)} +  U_0^{(n-1)}}{(\Delta t)^2}
    + \frac{U_0+U_0^{(n-1)}}{2}
    -\frac{d \tilde{F}}{d(U_0, U_0^{(n-1)})},\\
\frac{\widetilde{U}_K - 2 U_K^{(n)} +  U_K^{(n-1)}}{(\Delta t)^2}  
    = - \frac{2}{(\Delta x)^2}
    \left(
        \frac{\widetilde{U}_K+U_K^{(n-1)}}{2}
        -  \frac{\widetilde{U}_{K-1}+U_{K-1}^{(n-1)}}{2} \right)
    - \frac{\widetilde{U}_K+U_K^{(n-1)}}{2} \\
    \qquad \qquad
    - \frac{2}{\Delta x} \cdot \frac{\widetilde{U}_K - 2 U_K^{(n)} +  U_K^{(n)}}{(\Delta t)^2}
    + \frac{U_K+U_K^{(n-1)}}{2}
    -\frac{d \tilde{F}}{d(U_K, U_K^{(n-1)})},
\end{gather*}
that is,
\begin{align*}
    &\widetilde{U}_0 - \frac{(\Delta t)^2}{(\Delta x)^2} (\widetilde{U}_1 - \widetilde{U}_0)
    + \frac{2}{\Delta x} \widetilde{U}_0 + \frac{(\Delta t)^2}{2} \widetilde{U}_0 \\
    &=
    2 U_0^{(n)} -  U_0^{(n-1)} + \frac{(\Delta t)^2}{(\Delta x)^2} (U_1^{(n-1)} - U_0^{(n-1)}) \\
    &\quad
    + \frac{2}{\Delta x} (2 U_0^{(n)} -  U_0^{(n-1)}) + \frac{(\Delta t)^2}{2} U_0 
    - (\Delta t)^2 \cdot \frac{d \tilde{F}}{d(U_0, U_0^{(n-1)})},\\
    &\widetilde{U}_K + \frac{(\Delta t)^2}{(\Delta x)^2} (\widetilde{U}_K - \widetilde{U}_{K-1})
    + \frac{2}{\Delta x} \widetilde{U}_K + \frac{(\Delta t)^2}{2} \widetilde{U}_K \\
    &=
    2 U_K^{(n)} -  U_K^{(n-1)} + \frac{(\Delta t)^2}{(\Delta x)^2} (U_{K-1}^{(n-1)} - U_K^{(n-1)}) \\
    &\quad
    - \frac{2}{\Delta x} (2 U_K^{(n)} -  U_K^{(n-1)}) + \frac{(\Delta t)^2}{2} U_K
    - (\Delta t)^2 \cdot \frac{d \tilde{F}}{d(U_K, U_K^{(n-1)})}.
\end{align*}
In a similar manner, we rewrite the $k$-th equation of \eqref{eq:nmp} for $k=1,2,\ldots, K-1$ as follows: 
\begin{align*}
&\widetilde{U}_k - \frac{(\Delta t)^2}{2  (\Delta x)^2} (\widetilde{U}_{k+1} -  2\widetilde{U}_k + \widetilde{U}_{k-1}) + \frac{(\Delta t)^2}{2} \widetilde{U}_k \\
&= 2 U_k^{(n)} -  U_k^{(n-1)} + \frac{(\Delta t)^2}{2  (\Delta x)^2} (U_{k+1}^{(n-1)} -  2U_k^{(n-1)} + U_{k-1}^{(n-1)}) + \frac{(\Delta t)^2}{2} U_k
    - (\Delta t)^2 \cdot \frac{d \tilde{F}}{d(U_k, U_k^{(n-1)})}.
\end{align*}
We thus rewrite the equations \eqref{eq:nmp}
to the vector form: 
\begin{align}\label{eq:scheme:vec}
    \left\{ \left(1 + \frac{(\Delta t)^2}{2} \right) E - A\right\} \widetilde{\vect{U}}
    &= \vect{G}, 
\end{align}
where $E$ is
the $(K+1)$-dimensional identity matrix and the $(K+1)$-dimensional matrix $A$ is defined by 
\begin{align}
    A &:=
    \begin{pmatrix}
    -2 \alpha - \beta & 2 \alpha & 0 &0 &\cdots & 0\\
    \alpha & - 2 \alpha & \alpha & 0 &\cdots & 0\\
    0 &\alpha &-2 \alpha &\alpha & \cdots & 0\\
    \vdots &\ &\ddots&\ddots &\ddots &\vdots \\
    0&\cdots &0 &\alpha &-2 \alpha &\alpha \\
    0&\cdots&0&0 &2 \alpha &-2 \alpha -\beta
    \end{pmatrix},
\end{align}
for $\alpha := \frac{(\Delta t)^2}{2(\Delta x)^2}$, $\beta := \frac{2}{\Delta x}$, 
and $\vect{G}$ means the right-hand side vector determined by given values $\vect{U}^{(n)}, \vect{U}^{(n-1)}, \vect{U}$. 
From the completely same argument as in \cite{5}, we can show the matrix $A$ is 
negative definite
with respect to the inner product with the trapezoidal rule. 
We thus obtain the matrix $\left\{ \left(1 + \frac{(\Delta t)^2}{2} \right) E - A \right\}$ is 
positive definite, and hence is non-singular. 
Therefore, the mapping $\Phi$ is well-defined. 

Next, we show the mapping $\Phi$ is contractive on $X$. 
Multiplying $\eqref{eq:nmp}_1$ by
$\frac{\widetilde{U}_k - U_k^{(n-1)}}{\Delta t}$ and 
summing the resulting equations following the trapezoidal rule, 
we obtain
\begin{align*}
&\frac{1}{\Delta t} \left( \left\| \frac{\widetilde{\vect{U}} -\vect{U}^{(n)}}{\Delta t} \right\|_2^2 - \left\| \frac{\vect{U}^{(n)} -\vect{U}^{(n-1)}}{\Delta t} \right\|_2^2 \right)
+ \frac{1}{2\Delta t} \left( \left\| \widetilde{\vect{U}} \right\|_2^2 - \left\| \vect{U}^{(n-1)} \right\|_2^2 \right) \\
&= \sum_{k=0}^K{}'' \frac{\widetilde{U}_k - U_k^{(n-1)}}{\Delta t} 
\cdot \left( \delta_k^{\langle 2 \rangle}
    \left(
        \frac{\widetilde{U}_k+U_k^{(n-1)}}{2}
    \right) + \frac{U_k+U_k^{(n-1)}}{2}
    -   \frac{d \tilde{F}}{d(U_k, U_k^{(n-1)})} \right) \Delta x \\
&\leq - \frac{1}{2\Delta t} \left( \| D \widetilde{\vect{U}} \|^2 - \| D \vect{U}^{(n-1)} \|^2 \right)   
+ \left[ \frac{\widetilde{U}_k - U_k^{(n-1)}}{\Delta t} \delta_k^{\langle 1 \rangle} \left( \frac{\widetilde{U}_k+U_k^{(n-1)}}{2} \right) \right]_0^K \\
&+ \left\| \frac{\widetilde{\vect{U}} -\vect{U}^{(n-1)}}{\Delta t} \right\|_2 
\cdot \left( \left\| \frac{\vect{U} + \vect{U}^{(n-1)}}{2} \right\|_2 + \left\|  \frac{d \tilde{F}}{d(\vect{U}, \vect{U}^{(n-1)})} \right\|_2 \right). 
\end{align*}
Then it follows from the equations $\eqref{eq:nmp}_2$ and $\eqref{eq:nmp}_3$ that 
\begin{align*}
&\frac{1}{\Delta t} \left( \left\| \frac{\widetilde{\vect{U}} -\vect{U}^{(n)}}{\Delta t} \right\|_2^2 - \left\| \frac{\vect{U}^{(n)} -\vect{U}^{(n-1)}}{\Delta t} \right\|_2^2 \right)
+ \frac{1}{2\Delta t} \left( \left\| \widetilde{\vect{U}} \right\|_{H^1}^2 - \left\| \vect{U}^{(n-1)} \right\|_{H^1}^2 \right) \\
&\quad + \frac{1}{\Delta t}\left( \left| \frac{\widetilde{U}_0 - U_0^{(n)}}{\Delta t} \right|^2 - \left| \frac{U_0^{(n)} - U_0^{(n-1)}}{\Delta t} \right|^2 \right)
+ \frac{1}{\Delta t}\left( \left| \frac{\widetilde{U}_K - U_K^{(n)}}{\Delta t} \right|^2 - \left| \frac{U_K^{(n)} - U_K^{(n-1)}}{\Delta t} \right|^2 \right) \\
&\leq \left( \left\| \frac{\widetilde{\vect{U}} -\vect{U}^{(n)}}{\Delta t} \right\|_2 + \left\| \frac{\vect{U}^{(n)} -\vect{U}^{(n-1)}}{\Delta t} \right\|_2 \right) \cdot \left( \left\| \frac{\vect{U} + \vect{U}^{(n-1)}}{2} \right\|_2 + \left\|   \frac{d \tilde{F}}{d(\vect{U}, \vect{U}^{(n-1)})} \right\|_2 \right) \\
&\leq \frac{1}{2\Delta t} \left\| \frac{\widetilde{\vect{U}} -\vect{U}^{(n)}}{\Delta t} \right\|_2^2 
    + \frac{1}{4\Delta t} \left\| \frac{\vect{U}^{(n)} -\vect{U}^{(n-1)}}{\Delta t} \right\|_2^2 \\
&\qquad \qquad    + \frac{3 \Delta t}{2}\left( \left\| \frac{\vect{U} + \vect{U}^{(n-1)}}{2} \right\|_2 + \left\|   \frac{d \tilde{F}}{d(\vect{U}, \vect{U}^{(n-1)})} \right\|_2 \right)^2 \\
&\leq \frac{1}{2\Delta t} \left\| \frac{\widetilde{\vect{U}} -\vect{U}^{(n)}}{\Delta t} \right\|_2^2 
    + \frac{1}{4\Delta t} \left\| \frac{\vect{U}^{(n)} -\vect{U}^{(n-1)}}{\Delta t} \right\|_2^2 \\
&\qquad \qquad    + \frac{3 \Delta t}{2}\left( \left\| \vect{U}  \right\|_2^2 + \left\|  \vect{U}^{(n-1)} \right\|_2^2 + 2 \left\|  \frac{d \tilde{F}}{d(\vect{U}, \vect{U}^{(n-1)})} \right\|_2^2 \right).
\end{align*}
From Lemma \ref{prop-f} (1) we see that
\begin{align*}
\left\|  \frac{d \tilde{F}}{d(\vect{U}, \vect{U}^{(n-1)})} \right\|_2 \leq \min \left\{  C_{\tilde{F}'}(\sqrt{3} C_S M_n) \sqrt{L}, \  |F'(0)| \sqrt{L} + (1+\sqrt{3}) C_{\tilde{F}''}(\sqrt{3} C_S M_n) M_n \right\}. 
\end{align*}
Setting
\begin{align*}
C_{into}(M_n) := \sqrt{4 M_n^2 + 
2 \left(\min \left\{  C_{\tilde{F}'}(\sqrt{3} C_S M_n) \sqrt{L}, \  |F'(0)| \sqrt{L} + (1+\sqrt{3}) C_{\tilde{F}''}(\sqrt{3} C_S M_n) M_n \right\} \right)^2 },
\end{align*}
we arrive at  
\begin{align*}
&\left\| \frac{\widetilde{\vect{U}} -\vect{U}^{(n)}}{\Delta t} \right\|_2^2
+ \left\| \widetilde{\vect{U}} \right\|_{H^1}^2
 + \left( \left| \frac{\widetilde{U}_0 - U_0^{(n)}}{\Delta t} \right|^2 + \left| \frac{\widetilde{U}_K - U_K^{(n)}}{\Delta t} \right|^2  \right)  \\
&\leq \frac{5}{2} \left\| \frac{\vect{U}^{(n)} -\vect{U}^{(n-1)}}{\Delta t} \right\|_2^2 + \left\| \vect{U}^{(n-1)} \right\|_{H^1}^2 
+ \left(
\left| \frac{U_0^{(n)} - U_0^{(n-1)}}{\Delta t} \right|^2
+ \left| \frac{U_K^{(n)} - U_K^{(n-1)}}{\Delta t} \right|^2
\right)  \\
&\quad + 3 (\Delta t)^2 C_{into}(M_n)^2. 
\end{align*}
Choosing $\Delta t$ small satisfying 
\begin{equation}\label{eq:into}
   3 (\Delta t)^2 C_{into}(M_n)^2 \leq \frac{1}{2} M_n^2 
\end{equation} 
implies that $\Phi$ maps into $X_M$ from $X_M$. 

Next, let us show the mapping $\Phi$ is contractive.
For arbitrary
$\vect{U}_1 = \{ U_{1,k} \}_{k=0}^K \in X_M$
and
$\vect{U}_2 = \{ U_{2,k} \}_{k=0}^K \in X_M$,
we write
$\widetilde{\vect{U}}_1 =
\{ \widetilde{U}_{1,k} \}_{k=0}^K := \Phi[\vect{U}_1]$ 
and $\widetilde{\vect{U}}_2 = \{ \widetilde{U}_{2,k} \}_{k=0}^K := \Phi[\vect{U}_2]$, respectively.  
Subtracting these relations yields 
\begin{align}\label{eq:nmp2}
    \left\{
    \begin{alignedat}{1}
    &\frac{\widetilde{U}_{1,k} - \widetilde{U}_{2,k}}{(\Delta t)^2}  
    = \delta_k^{\langle 2 \rangle}
    \left(
        \frac{\widetilde{U}_{1,k} - \widetilde{U}_{2,k}}{2}
    \right)
    - \frac{\widetilde{U}_{1,k} - \widetilde{U}_{2,k}}{2} \\
    &\qquad \qquad \qquad + \frac{U_{1,k} - U_{2,k}}{2}
    - \left\{ \frac{d \tilde{F}}{d(U_{1,k}, U_k^{(n-1)})}
    - \frac{d \tilde{F}}{d(U_{2,k}, U_k^{(n-1)})} \right\},\\
    &\frac{\widetilde{U}_{1,0} - \widetilde{U}_{2,0}}{(\Delta t)^2} 
        - \delta_k^{\langle 1 \rangle}
        \left(
            \frac{\widetilde{U}_{1,0} - \widetilde{U}_{2,0}}{2}
        \right)
        = 0,\\
    &\frac{\widetilde{U}_{1,K} - \widetilde{U}_{2,K}}{(\Delta t)^2} 
       + \delta_k^{\langle 1 \rangle}
        \left(
            \frac{\widetilde{U}_{1,K} - \widetilde{U}_{2,K}}{2}
        \right)
        = 0.
    \end{alignedat}
    \right.
\end{align}
In a similar manner as above, multiplying
$\eqref{eq:nmp2}_1$
by $2(U_{1,k}-U_{2,k})$ we obtain 
\begin{align*}
    &2 \left\| \frac{\widetilde{\vect{U}_1} - \widetilde{\vect{U}_2}}{\Delta t} \right\|_2^2 
    + \left\| \widetilde{\vect{U}_1} - \widetilde{\vect{U}_2} \right\|_{H^1}^2 \\
    &= \left[ (\widetilde{U}_{1,k}-\widetilde{U}_{2,k}) \delta_k^{\langle 1 \rangle} (\widetilde{U}_{1,k}-\widetilde{U}_{2,k}) \right]_0^K 
    + \sum_{k=0}^K{}'' (\widetilde{U}_{1,k}-\widetilde{U}_{2,k}) \cdot (U_{1,k}-U_{2,k}) \Delta x \\
    &\qquad + 2\sum_{k=0}^K{}'' (\widetilde{U}_{1,k}-\widetilde{U}_{2,k}) \cdot \left\{ \frac{d \tilde{F}}{d(U_{1,k}, U_k^{(n-1)})}
    - \frac{d \tilde{F}}{d(U_{2,k}, U_k^{(n-1)})} \right\} \Delta x.
\end{align*}
By virtue of
$\eqref{eq:nmp2}_2$ and
$\eqref{eq:nmp2}_3$,
we have
\begin{align*}
    &\left\| \frac{\widetilde{\vect{U}_1} - \widetilde{\vect{U}_2}}{\Delta t} \right\|_2^2 
    + \left\| \widetilde{\vect{U}_1} - \widetilde{\vect{U}_2} \right\|_{H^1}^2
    +
    \left| \frac{\widetilde{U}_{1,0} - \widetilde{U}_{2,0}}{\Delta t} \right|^2
    +
    \left| \frac{\widetilde{U}_{1,K} - \widetilde{U}_{2,K}}{\Delta t} \right|^2
    \\
    &\leq \frac{(\Delta t)^2}{2}\left\| \vect{U}_1 - \vect{U}_2 \right\|_2^2 
    + 2
    (\Delta t)^2 \left\|   \frac{d \tilde{F}}{d(\vect{U}_1, \vect{U}^{(n-1)})} 
    -   \frac{d \tilde{F}}{d(\vect{U}_2, \vect{U}^{(n-1)})}
    \right\|_2^2.
\end{align*}
From Lemma \ref{prop-f} (2) we see that 
\[
\left\|   \frac{d \tilde{F}}{d(\vect{U}_1, \vect{U}^{(n-1)})} 
    -   \frac{d \tilde{F}}{d(\vect{U}_2, \vect{U}^{(n-1)})}
    \right\|_2
    \leq \frac{1}{2} C_{\tilde{F}''}(\sqrt{3} C_S M_n) \| \vect{U}_1 - \vect{U}_2 \|_2. 
\]
Setting 
\[
C_{contr}(M_n) := \sqrt{\frac{1}{2} + 
\frac{1}{2} C_{\tilde{F}''}(\sqrt{3} C_S M_n)^2}, 
\]
we see that 
\begin{align*}
    \left\| \frac{\widetilde{\vect{U}_1} - \widetilde{\vect{U}_2}}{\Delta t} \right\|_2^2 
    + \left\| \widetilde{\vect{U}_1} - \widetilde{\vect{U}_2} \right\|_{H^1}^2 
    \leq (\Delta t)^2 C_{contr}(M_n)^2 \left\| \vect{U}_1 - \vect{U}_2 \right\|_2^2. 
\end{align*}
By taking $\Delta t$ small satisfying 
\begin{equation}\label{eq:contr}
   \Delta t C_{contr}(M_n) < 1,  
\end{equation} 
the mapping $\Phi$ is contractive in $X_M$.

Now, in view of \eqref{eq:into} and \eqref{eq:contr},
we set
$R_1(M_n):=\min \left\{ \frac{1}{C_{contr}(M_n)}, \frac{M_n}{\sqrt{6} C_{into}(M_n)} \right\}$.
Under the smallness assumption of
$\Delta t < R_1$,
the unique existence of fixed point of
$\Phi$
is assured. 
Consequently,
the fixed point is $\vect{U}^{(n+1)}$ which we seek.
\end{proof}

Observe that,
for example,
in the case of
$F(u)=u^4/4$
the assumption for
$\Delta t$ requires
$\Delta t \leq C/(1+ M_n^2)$
to apply Theorem \ref{thm:exist:sol}.
This indicates that, if we have
an a priori estimate
independent of $n$ for the quantity
$M_n$,
we can construct a global solution
by applying Theorem \ref{thm:exist:sol}
repeatedly.
In fact, combining the theorem with the energy estimate, we obtain the global existence of solution. 
\begin{theorem}[Global existence]\label{thm:exist:gl-sol}
Assume that $\tilde{F} \in C^2$ is bounded from below, i.e., $\tilde{F} \geq -B_F$ for some constant $B_F \in \mathbb{R}$. 
Then for any $\Delta t < R$ for some $R$ determined by $(\vect{U}^{(0)}, \vect{U}^{(1)})$, $T$ and $L$ 
there exists a unique solution 
$\{U_k^{(n)}\}_{k=-1}^{K+1}$ satisfying \eqref{eq:scheme:semi:sec4} for
$n = 2,\ldots, N$. 
\end{theorem}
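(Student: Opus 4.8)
The plan is to run an induction on $n$ that couples the local existence result (Theorem~\ref{thm:exist:sol}) with the discrete energy conservation law (Theorem~\ref{thm:energy:conserv:semi}). The obstruction to naively iterating Theorem~\ref{thm:exist:sol} is that its step-size threshold $R_1(M_n)$ depends on $M_n$, which a priori could grow with $n$; moreover the number of steps $N=T/\Delta t$ itself blows up as $\Delta t\to 0$, so one cannot argue by ``finitely many applications.'' The whole point is therefore to produce a bound $M_n\le \overline{M}$ with $\overline{M}$ independent of $n$ and of $\Delta t$, after which a single admissible $R$ can be fixed once and for all.

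First I would extract the a priori bound from energy conservation. If $\vect{U}^{(0)},\dots,\vect{U}^{(n)}$ satisfy \eqref{eq:scheme:semi:sec4}, then Theorem~\ref{thm:energy:conserv:semi} gives $J_{\mathrm d}(\vect{U}^{(m)},\vect{U}^{(m-1)})=J_0:=J_{\mathrm d}(\vect{U}^{(1)},\vect{U}^{(0)})$ for every $1\le m\le n$. Every kinetic and gradient contribution to $J_{\mathrm d}$ is nonnegative, and $\sum_{k=0}^K{}'' F(U_k^{(m)},U_k^{(m-1)})\Delta x=\tfrac12\sum_{k=0}^K{}''(\tilde{F}(U_k^{(m)})+\tilde{F}(U_k^{(m-1)}))\Delta x\ge -B_F L$ (using $\sum_{k=0}^K{}''\Delta x=L$ and $\tilde{F}\ge -B_F$), so each nonnegative piece is bounded by $J_0+B_F L$. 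In particular $\tfrac12\|\delta_n^+\vect{U}^{(m-1)}\|_2^2$, $\tfrac14\|D\vect{U}^{(m-1)}\|^2$, and the two boundary velocities are all $\le J_0+B_F L$, uniformly in $m$ and $\Delta t$.

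Second, and this is the genuinely delicate point, the norm $M_n$ also contains the full $\|\vect{U}^{(n-1)}\|_{2}$, which the energy does \emph{not} control: there is no Dirichlet condition here, hence no discrete Poincaré inequality bounding $\|\vect{U}\|_2$ by $\|D\vect{U}\|$. I would instead bound it by telescoping the time increments, writing $\vect{U}^{(n-1)}=\vect{U}^{(0)}+\Delta t\sum_{m=1}^{n-1}\delta_n^+\vect{U}^{(m-1)}$ and using the kinetic bound $\|\delta_n^+\vect{U}^{(m-1)}\|_2\le\sqrt{2(J_0+B_F L)}$ together with $(n-1)\Delta t\le T$ to get $\|\vect{U}^{(n-1)}\|_2\le \|\vect{U}^{(0)}\|_2+T\sqrt{2(J_0+B_F L)}$. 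Collecting all four pieces of $M_n^2$ yields $M_n\le \overline{M}$, with $\overline{M}$ depending only on $(\vect{U}^{(0)},\vect{U}^{(1)})$, $T$ and $L$ (through $J_0$ and $B_F$).

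Finally I would close the induction. Since $R_1(M_n)$ degenerates to $0$ as $M_n\to 0$ (the factor $M_n/(\sqrt6\,C_{into}(M_n))$ with $C_{into}(0)>0$ when $\tilde{F}'(0)\neq0$), one cannot take $\inf_n R_1(M_n)$. Instead I would re-run the contraction argument of Theorem~\ref{thm:exist:sol} on the \emph{fixed} ball $\{\vect{U}:\|(\vect{U},\vect{U}-\vect{U}^{(n)})\|_X^2\le 3\overline{M}^2\}$, evaluating the constants of Lemma~\ref{prop-f} at the fixed scale $\sqrt3\,C_S\overline{M}$: because $M_n\le\overline{M}$, the ``into'' estimate still closes and $\Phi$ remains a contraction provided $\Delta t<R$ for one threshold $R=R(\overline{M})>0$ determined by $(\vect{U}^{(0)},\vect{U}^{(1)})$, $T$, $L$. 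The induction then proceeds: given $\vect{U}^{(0)},\dots,\vect{U}^{(n)}$ solving the scheme (so $M_n\le\overline{M}$ by Steps~1--2), the contraction produces a unique $\vect{U}^{(n+1)}$ solving \eqref{eq:scheme:semi:sec4}, and energy conservation applied to the extended sequence reconfirms $M_{n+1}\le\overline{M}$; repeating up to $n=N$ finishes the proof. The main obstacle is precisely the $L^2$ bound of Step~2, as the conserved energy controls only the gradient seminorm and the potential, and it is the time-telescoping estimate—using $N\Delta t=T$—that renders $\overline{M}$, and hence $R$, uniform.
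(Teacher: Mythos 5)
Your proposal is correct and follows essentially the same route as the paper: the discrete energy conservation law gives a uniform-in-$n$ bound on the kinetic, gradient, and boundary-velocity parts of $M_n$, the remaining $L^2$ part (uncontrolled by the energy, as you note) is recovered by telescoping the time increments using $n\Delta t\le T$, and then the local existence theorem is iterated with a single step-size threshold $R$ determined by the resulting uniform bound. Your extra step of re-running the contraction on the fixed ball of radius $\sqrt{3}\,\overline{M}$ (rather than simply replacing $M_n$ by $M$ in the threshold, as the paper does) addresses the possible non-monotonicity of $R_1(\cdot)$ and is a sound, slightly more careful rendering of the same argument.
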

\begin{proof}
Once we show that $M_n$ is independent of $n$, namely,
for some constant $M$ independent of $n$ the estimate
\begin{align}\label{eq:M:sec4}
    \| (\vect{U}^{(n-1)}, \vect{U}^{(n)}-\vect{U}^{(n-1)}) \|_X
    \leq M
\end{align}
holds,
the claim is proved by exchanging the assumption $\Delta t < R_1(M_n,L)$ in Theorem \ref{thm:exist:sol} to $\Delta t < R_1(M,L)=:R$. 
It is easily seen from the energy conservation law that 
\begin{align}
 &\left\| \delta_n^+ \vect{U}^{(n)} \right\|_2^2 
+ \sum_{k=0}^{K-1} 
    \left(
        \frac{(\delta_k^+ U_k^{(n+1)})^2 + (\delta_k^+ U_k^{(n)})^2}{2}
    \right)
    \Delta x
+ \left| U_0^{(n)} \right|^2 + \left| U_K^{(n)} \right|^2 \\
&\leq 2 J_d(\vect{U}^{(1)}, \vect{U}^{(0)}) + 2B_F L =:E_0. 
\end{align}
From a direct calculation,
\begin{align*}
    \left\| \vect{U}^{(n)} \right\|_2 
    &\leq  \left\| \vect{U}^{(n-1)} \right\|_2 + \Delta t \left\| \delta_n^+ \vect{U}^{(n-1)} \right\|_2 \\
    &\leq  \left\| \vect{U}^{(n-1)} \right\|_2 + \Delta t \sqrt{E_0} \\
    &\leq \cdots \\
    &\leq \left\| \vect{U}^{(0)} \right\|_2 + T \sqrt{E_0}. 
\end{align*}
Then we obtain 
\[
    \| (\vect{U}^{(n-1)}, \vect{U}^{(n)}-\vect{U}^{(n-1)}) \|_X^2
    \leq
    E_0 + \left( \left\| \vect{U}^{(0)} \right\|_2 + T \sqrt{E_0} \right)^2, 
\]
which implies the claim
\eqref{eq:M:sec4}
by taking
$M^2 := 2 E_0 + \left( \left\| \vect{U}^{(0)} \right\|_2 + T \sqrt{E_0} \right)^2$. 
\end{proof}

Let us consider the sequence $\{ \vect{U}_j \}$ defined by 
$\vect{U}_{j+1} =\Phi[\vect{U}_j]$, $\vect{U}_0 = \vect{U}^{(n)}$. 
Then under the assumption of Theorem \ref{thm:exist:gl-sol} we see that $\vect{U}_j \to \vect{U}^{(n)}$ as $j \to \infty$. 
We thus utilize the proof in the actual numerical simulation.

\begin{remark}\label{rem:exist:sol}
Suppose that the nonlinearity $\tilde{F}$
satisfies for some $\varepsilon>0$ 
\begin{equation}\label{ass:nl}
\sum_{k=0}^K{}'' (\tilde{F}(u_k) - \varepsilon u_k^2) \Delta x \geq  0
\qquad \text{for any} \quad \{ u_k \}_{k=0}^k \in \mathbb{R}^{K+1}
\end{equation}
in addition to the assumptions of Theorem \ref{thm:exist:gl-sol}. 
Then 
the solution can be extended infinitely $n=0,1,2,\ldots$. 
Indeed, in this case from only the energy conservation law we see that 
\[
 \left\| \delta_n^+ \vect{U}^{(n)} \right\|_2^2 
+ \min\{ 1, \varepsilon \} \left\| \delta_n^+ \vect{U}^{(n)} \right\|_{H^1}^2
\leq 2 J_d(\vect{U}^{(1)}, \vect{U}^{(0)}) + 2B_F L =:M, 
\]
which implies that $M$ no longer depends on $T$. 
For example, $\tilde{F}(u) = u^4/4 + u^2/2$ (Klein-Gordon type equation) satisfies \eqref{ass:nl} unconditionally for any initial value. 
\end{remark}

\section{Error estimates}
In this section, we discuss
estimates for the error between
the approximate solution and the exact solution. 
We impose the regularity assumption for the nonlinearity
$\widetilde{F} \in C^3$,
which is natural because we will assume
$u \in C^4$
for the exact solution. 
Let $u_k^{(n)}$ denote 
$u(n\Delta t, k \Delta x)$,
that is, the values of $u$ on the lattice,
and define the error by
$e_k^{(n)} = U_k^{(n)} - u_k^{(n)}$.
The vector forms are denoted by
$\vect{u}^{(n)}$ and $\vect{e}^{(n)}$.

\begin{theorem}\label{thm:error}
Let $\tilde{F} \in C^3$ and $u \in C^4([0,T]\times [0,L])$
be a solution of the equation \eqref{eq:sdw:sec5}
with initial data
$u(0,x) \in C^4([0,L])$
and 
$\partial_t u(0,x) \in C^3([0,L])$.
Let 
$\vect{U}^{(n)}$
be the solution of the difference scheme \eqref{eq:scheme:semi:sec4}
with the initial data
$\vect{U}^{(0)} = \vect{u}^{(0)}$
and
$\vect{U}^{(1)} = \vect{u}^{(1)}$. 
Denote the $L^{\infty}$-bound of solutions by $C_1$, namely, $C_1 := \max_{n=0,1,\ldots,N} \max \{ \| \vect{U}^{(n)} \|_{\infty}, \| \vect{u}^{(n)} \|_{\infty} \}$. 
Let us fix the constant $B$ satisfying $B < 1/(1 + C_{\tilde{F}''}(C_1))$. 
Then for $\Delta t < B$ we have 
\begin{align}
    \max_{n=1,2,\ldots,N} \left( \| \delta_n^- \vect{e}^{(n)} \|_2
    + \| \vect{e}^{(n)} \|_{H^1}
    + |\delta_n^- e_0^{(n)}|
    + |\delta_n^- e_K^{(n)}|
    \right) 
    \le C ((\Delta x)^2 + (\Delta t)^2), 
\end{align}
where the constant
$C$ is independent of
$\Delta x$ and $\Delta t$.
\end{theorem}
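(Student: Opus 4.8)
The plan is to run a discrete energy argument for the error $\vect{e}^{(n)}$ that mirrors the energy--conservation computation of Theorem \ref{thm:energy:conserv:semi}, with the truncation error and the nonlinear mismatch appearing as source terms.

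First I would establish consistency. Since $u \in C^4$ solves \eqref{eq:sdw:sec5}, substituting the nodal values $u_k^{(n)}$ into \eqref{eq:scheme:semi:sec4} produces a residual $\tau_k^{(n)}$ in the interior equation and residuals $\tau_0^{(n)}, \tau_K^{(n)}$ in the two boundary equations. Taylor expansion, using $u \in C^4$ together with the PDE and its boundary conditions to expand the ghost values, gives $\|\vect{\tau}^{(n)}\|_2 + |\tau_0^{(n)}| + |\tau_K^{(n)}| \le C((\Delta x)^2 + (\Delta t)^2)$ uniformly in $n$; here $\delta_n^{\langle 2\rangle}u = u_{tt} + O((\Delta t)^2)$, the averaged second difference $\delta_k^{\langle 2\rangle}(\tfrac{u^{(n+1)}+u^{(n-1)}}{2}) = u_{xx} + O((\Delta x)^2 + (\Delta t)^2)$, and the two-point quotient $\tfrac{d\tilde F}{d(u^{(n+1)},u^{(n-1)})} = \tilde F'(u^{(n)}) + O((\Delta t)^2)$ (where $\tilde F \in C^3$ is used). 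Subtracting the scheme satisfied exactly by $\vect{U}^{(n)}$ from these residual equations yields the error equations, which have the same linear structure as \eqref{eq:scheme:semi:sec4} in $\vect{e}^{(n)}$, forced by $-\tau_k^{(n)}$ and by the nonlinear difference $\tfrac{d\tilde F}{d(U^{(n+1)},U^{(n-1)})} - \tfrac{d\tilde F}{d(u^{(n+1)},u^{(n-1)})}$.

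Next I would derive the energy identity. Multiplying the interior error equation by $\delta_n^{\langle 1\rangle} e_k^{(n)}$, summing over $k$ with the trapezoidal rule, applying the summation-by-parts formula \eqref{sbp}, and combining the boundary contributions with the boundary error equations multiplied by $\delta_n^{\langle 1\rangle} e_0^{(n)}$ and $\delta_n^{\langle 1\rangle} e_K^{(n)}$ --- exactly as in the proof of Theorem \ref{thm:energy:conserv:semi}, using $\delta_n^-(\delta_n^+ e)^2 = 2\delta_n^{\langle 2\rangle}e\,\delta_n^{\langle 1\rangle}e$ --- collapses the left-hand side into $\tfrac12\delta_n^-$ of the discrete error energy
\begin{align*}
\mathcal{E}^{(n)} := \|\delta_n^+\vect{e}^{(n)}\|_2^2 + \tfrac12\bigl(\|D\vect{e}^{(n+1)}\|^2 + \|D\vect{e}^{(n)}\|^2\bigr) + |\delta_n^+ e_0^{(n)}|^2 + |\delta_n^+ e_K^{(n)}|^2,
\end{align*}
while the right-hand side is the pairing of $\delta_n^{\langle 1\rangle}\vect{e}^{(n)}$ with the nonlinear difference and with $\vect{\tau}^{(n)}$. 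For the nonlinear difference I would invoke Lemma \ref{prop-f}(2): since $\|\vect{U}^{(n)}\|_\infty, \|\vect{u}^{(n)}\|_\infty \le C_1$, the averaged second-order quotient is bounded via \eqref{mvt2} by $C_{\tilde F''}(C_1)$, giving
\begin{align*}
\Bigl\|\tfrac{d\tilde F}{d(\vect{U}^{(n+1)},\vect{U}^{(n-1)})} - \tfrac{d\tilde F}{d(\vect{u}^{(n+1)},\vect{u}^{(n-1)})}\Bigr\|_2 \le \tfrac12 C_{\tilde F''}(C_1)\bigl(\|\vect{e}^{(n+1)}\|_2 + \|\vect{e}^{(n-1)}\|_2\bigr).
\end{align*}
Using $\delta_n^{\langle 1\rangle} = \tfrac12(\delta_n^+ + \delta_n^-)$, Cauchy--Schwarz and Young's inequality then bound $\delta_n^-\mathcal{E}^{(n)}$ by $C(\|\vect{e}^{(n+1)}\|_2^2 + \|\vect{e}^{(n-1)}\|_2^2 + \mathcal{E}^{(n)} + \mathcal{E}^{(n-1)}) + C((\Delta x)^2+(\Delta t)^2)^2$.

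Finally I would close the estimate by discrete Gronwall. The norms $\|\vect{e}^{(m)}\|_2$ on the right are not controlled by $\mathcal{E}^{(m)}$ directly (the scheme carries no mass term), so I would bound them by telescoping the velocity: since $\vect{e}^{(0)}=\vect{e}^{(1)}=\vect{0}$ by the choice of initial data, $\vect{e}^{(m)} = \Delta t\sum_{j=1}^{m-1}\delta_n^+\vect{e}^{(j)}$, whence $\|\vect{e}^{(m)}\|_2^2 \le T\,\Delta t\sum_{j}\mathcal{E}^{(j)}$. Substituting this, multiplying by $\Delta t$ and summing (note $\mathcal{E}^{(0)}=0$) yields a recursion of the form $\mathcal{E}^{(n)} \le C\Delta t\sum_{m\le n}\mathcal{E}^{(m)} + CT((\Delta x)^2+(\Delta t)^2)^2$, and the discrete Gronwall inequality gives $\mathcal{E}^{(n)} \le C((\Delta x)^2+(\Delta t)^2)^2$ uniformly in $n$. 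Taking square roots, and recombining with the telescoped bound on $\|\vect{e}^{(n)}\|_2$ to recover the full $H^1$-norm, produces the claimed estimate. I expect the main obstacle to be the implicit coupling to the future time level: the term $\|\vect{e}^{(n+1)}\|_2^2$ together with the $n{+}1$ contributions inside $\mathcal{E}^{(n)}$ must be absorbed onto the left-hand side before Gronwall can be applied, and it is precisely this absorption, forcing $(1 - C\Delta t)\mathcal{E}^{(n)}$ to stay positive, that produces the smallness condition $\Delta t < B < 1/(1+C_{\tilde F''}(C_1))$ in the hypothesis.
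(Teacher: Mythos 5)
Your proposal is correct and follows the paper's proof in all essentials: the same decomposition of the defect into a truncation error and a nonlinear mismatch $\vect{\eta}^{(n)}$, the same energy identity obtained by multiplying the error equations by $\delta_n^{\langle 1 \rangle} e_k^{(n)}$ and summing with the trapezoidal rule (as in Theorem \ref{thm:energy:conserv:semi}), the same use of Lemma \ref{prop-f}~(2) with the $L^\infty$ bound $C_1$ to estimate $\|\vect{\eta}^{(n)}\|_2$ by $C_{\tilde F''}(C_1)\bigl(\|\vect{e}^{(n+1)}\|_2 + \|\vect{e}^{(n-1)}\|_2\bigr)$, and the same absorption of the implicit $(n{+}1)$-level terms, which is indeed exactly where the hypothesis $\Delta t < B < 1/(1+C_{\tilde F''}(C_1))$ enters. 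The one place you genuinely deviate is the treatment of the zeroth-order part of the error norm: the paper augments the energy inequality \eqref{eq:e:energy:sec5} with the separate discrete $L^2$ identity \eqref{eq:e:L2:sec5}, so that the full $H^1$ norms sit inside the error energy $\mathcal{E}^{(n)}$ and the estimate closes as a purely two-level multiplicative recursion \eqref{eq:error:energy:n+1:n}, iterated directly with the bound $\frac{1+A\Delta t}{1-A\Delta t} \le \exp\bigl(\frac{2A}{1-AB}\Delta t\bigr)$; you instead keep only the seminorm $\|D\vect{e}\|$ in the energy and recover $\|\vect{e}^{(m)}\|_2$ by telescoping the discrete velocity from the zero initial error, $\|\vect{e}^{(m)}\|_2^2 \le T\,\Delta t \sum_j \mathcal{E}^{(j)}$, which forces a summed (integral-form) discrete Gronwall inequality. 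Both mechanisms close the argument with the same rate and the same source of the time-step restriction; the paper's trick keeps the recursion local in time and yields the explicit constant $\frac{CT}{1-AB}\exp\bigl(\frac{2AT}{1-AB}\bigr)$, while your telescoping is a slightly more generic argument that imports factors of $T$ earlier but requires no extra structural identity.
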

\begin{proof}
Let
$\vect{\zeta}^{(n)}=\{ \zeta_k^{(n)} \}_{k=0}^{K}$
be defined by
\begin{align}
    \zeta_k^{(n)}
    &:=
    -( \delta_n^{\langle 2 \rangle} u_k^{(n)} - \partial_t^2 u_k^{(n)} )
    + \left\{ \delta_k^{\langle 2 \rangle} 
    \left( \frac{u_k^{(n+1)} + u_k^{(n-1)}}{2} \right) - \partial_x^2 u_k^{(n)} \right\} \\
    &\qquad - \left\{  \frac{d \widetilde{F}}{d(u_k^{(n+1)},u_k^{(n-1)})} - \tilde{F}'(u_k^{(n)}) \right\}
 \end{align}
and let
\begin{align}
    \zeta_{b,0}^{(n)}
    &:=
    -( \delta_n^{\langle 2\rangle} u_0^{(n)} - \partial_t^2 u_0^{(n)} )
    + \left\{ \delta_k^{\langle 1\rangle}
    \left( \frac{u_0^{(n+1)} + u_0^{(n-1)}}{2} \right) - \partial_x^2 u_0^{(n)} \right\},\\
    \zeta_{b,K}^{(n)}
    &:=
    -( \delta_n^{\langle 2\rangle} u_K^{(n)} - \partial_t^2 u_K^{(n)} )
    - \left\{ \delta_k^{\langle 1\rangle}
    \left( \frac{u_K^{(n+1)} + u_K^{(n-1)}}{2} \right) - \partial_x^2 u_K^{(n)} \right\}.
\end{align}
We also define
$\vect{\eta}^{(n)} = \{ \eta_k^{(n)} \}_{k=0}^K$
by
\begin{align}
    \eta_k^{(n)}
    := \frac{d \widetilde{F}}{d(U_k^{(n+1)}, U_k^{(n-1)})}
    - \frac{d \widetilde{F}}{d(u_k^{(n+1)}, u_k^{(n-1)})}.
\end{align}
Then, by \eqref{eq:sdw:sec5} and \eqref{eq:scheme:semi:sec4},
we have
\begin{align}\label{eq:scheme:e}
    \left\{
    \begin{alignedat}{1}
    &\delta_n^{\langle 2\rangle} e_k^{(n)}
    = \delta_k^{\langle 2 \rangle}
    \left(
        \frac{e_k^{(n+1)}+e_k^{(n-1)}}{2}
    \right)
    - \eta_k^{(n)} + \zeta_k^{(n)}
        ,\\
    &\delta_n^{\langle 2\rangle} e_0^{(n)}
        - \delta_k^{\langle 1 \rangle}
        \left(
            \frac{e_0^{(n+1)}+e_0^{(n-1)}}{2}
        \right)
        = \zeta_{b,0}^{(n)},\\
    &\delta_n^{\langle 2\rangle} e_K^{(n)}
       + \delta_k^{\langle 1 \rangle}
        \left(
            \frac{e_K^{(n+1)}+e_K^{(n-1)}}{2}
        \right)
        = \zeta_{b,K}^{(n)},
    \end{alignedat}
    \right.
\end{align}
where
$n=1,\ldots,N-1$ and $k=0,\ldots,K$.
Multiplying $\eqref{eq:scheme:e}_1$ by
$\delta_n^{\langle 1 \rangle} e_k^{(n)}$
and summing the resulting equations 
following the trapezoidal rule with the help of
$\eqref{eq:scheme:e}_2$ and $\eqref{eq:scheme:e}_3$,
we obtain 
\begin{align}\label{eq:e:energy:sec5}
    &\frac{1}{\Delta t} \left( \| \delta_n^- \vect{e}^{(n+1)} \|_2^2 - \| \delta_n^- \vect{e}^{(n)} \|_2^2 \right) \\
    &\quad
    + \frac{1}{\Delta t} \left( \frac{\| D \vect{e}^{(n+1)} \|^2 + \| D \vect{e}^{(n)} \|^2}{2} - \frac{\| D \vect{e}^{(n)} \|^2 + \| D \vect{e}^{(n-1)} \|^2}{2} \right)\\
    &\quad
    + \frac{1}{\Delta t}
    \left(
    | \delta_n^- e_0^{(n+1)} |^2 - |\delta_n^- e_0^{(n)} |^2
    + | \delta_n^- e_K^{(n+1)} |^2 - |\delta_n^- e_K^{(n)} |^2
    \right) \\
    &\leq (\| \vect{\eta}^{(n)} \|_2
    + \| \vect{\zeta}^{(n)} \|_2
    ) 
    \| \delta_n^{\langle 1 \rangle} \vect{e}^{(n)} \|_2 \\
    &\quad
    + |\zeta_{b,0}^{(n)}| | \delta_n^{\langle 1 \rangle} e_0^{(n)}|
    + |\zeta_{b,K}^{(n)}| | \delta_n^{\langle 1 \rangle} e_K^{(n)}|.
\end{align}
From an argument similar to the previous section, we see that 
\begin{align}\label{eq:e:L2:sec5}
\| \vect{e}^{(n+1)} \|_2^2 - \| \vect{e}^{(n-1)} \|_2^2
\leq
2 \Delta t  \| \delta_n^{\langle 1 \rangle}  \vect{e}^{(n)} \|_2 
(\| \vect{e}^{(n+1)} \|_2+ \| \vect{e}^{(n-1)} \|_2).
\end{align}
Now, we set the error energy
$\mathcal{E}^{(n)}$
by 
\begin{align}
    \mathcal{E}^{(n)}
    := \| \delta_n^- \vect{e}^{(n)} \|_2^2 + \frac{\| \vect{e}^{(n+1)} \|_{H^1}^2 + \| \vect{e}^{(n)} \|_{H^1}^2}{2}
    + |\delta_n^- e_0^{(n)}|^2 + |\delta_n^- e_K^{(n)}|^2.
\end{align}
Then, adding
\eqref{eq:e:energy:sec5} and \eqref{eq:e:L2:sec5}
implies
\begin{align}
    \mathcal{E}^{(n+1)} - \mathcal{E}^{(n)} 
    &\leq \Delta t (\| \vect{\eta}^{(n)} \|_2 + \| \vect{\zeta}^{(n)} \|_2
    + \| \vect{e}^{(n+1)} \|_2+ \| \vect{e}^{(n-1)} \|_2)  \| \delta_n^{\langle 1 \rangle} \vect{e}^{(n)} \|_2 \\
    &\quad
    + |\zeta_{b,0}^{(n)}| | \delta_n^{\langle 1 \rangle} e_0^{(n)}|
    + |\zeta_{b,K}^{(n)}| | \delta_n^{\langle 1 \rangle} e_K^{(n)}|.
\end{align}
It follows from Lemma \ref{prop-f} (2) that 
\[
\| \vect{\eta}^{(n)} \|_2 \leq C_{\tilde{F}''}(C_1) (\| \vect{e}^{(n+1)} \|_2 + \| \vect{e}^{(n)} \|_2). 
\]
Observe that from the same argument as in \cite{5} 
\[
|\zeta_k^{(n)}|, |\zeta_{b,0}^{(n)}|, |\zeta_{b,K}^{(n)}| 
\leq C\left\{ (\Delta x)^2 + (\Delta t)^2 \right\}. 
\]
Then we see that for any $\varepsilon>0$
\begin{align*}
    \mathcal{E}^{(n+1)} - \mathcal{E}^{(n)} 
    &\leq \Delta t (1+  C_{\tilde{F}''}(C_1) ) (\mathcal{E}^{(n+1)} + \mathcal{E}^{(n)}) \\
    &\quad  + C \Delta t \left\{ (\Delta x)^2 + (\Delta t)^2 \right\} (\sqrt{\mathcal{E}^{(n+1)}} + \sqrt{\mathcal{E}^{(n)}} ) \\ 
&\leq \Delta t (1+  C_{\tilde{F}''}(C_1) + \varepsilon) (\mathcal{E}^{(n+1)} + \mathcal{E}^{(n)}) 
 + \frac{C}{\varepsilon} \Delta t \left\{ (\Delta x)^4 + (\Delta t)^4 \right\}. 
\end{align*}
If we denote $1+  C_{\tilde{F}''}(C_1) + \varepsilon$ by $A$, the above estimate is rewritten as 
\begin{align}\label{eq:error:energy:n+1:n}
    \mathcal{E}^{(n+1)} \leq \frac{1 + A \Delta t}{1 - A\Delta t} \mathcal{E}^{(n)} + \frac{C \Delta t}{1- A\Delta t} \left\{ (\Delta x)^4 + (\Delta t)^4 \right\}. 
\end{align}
Since $B< \frac{1}{1+C_{\tilde{F}''}(C_1) }$, we can fix $\varepsilon > 0$ satisfying $B< \frac{1}{1+ C_{\tilde{F}''}(C_1) + \varepsilon} < \frac{1}{1+C_{\tilde{F}''}(C_1) }$. 
For $\Delta t < B$, we observe that 
\[
\frac{1}{1 - A\Delta t} \leq \frac{1}{1 - AB}, 
\qquad \frac{1 + A \Delta t}{1 - A\Delta t} \leq 1 + \frac{2A}{1 -A B} \Delta t \leq \exp \left( \frac{2A}{1 -A B} \Delta t \right), 
\]
Noting them and applying the inequality
\eqref{eq:error:energy:n+1:n}
repeatedly,
for any
$\Delta t < B$
we have
\[
\mathcal{E}^{(n)} \leq \frac{CT}{1 -A B} \exp \left( \frac{2AT}{1 -A B} \right) \left\{ (\Delta x)^4 + (\Delta t)^4 \right\}, 
\]
which completes the proof. 
\end{proof}

\section{Numerical computations}
In this section, we show
numerical computations of
our difference schemes.
We set
$T=5, L=6$
and consider the case of semilinear wave equation with
the nonlinearity
$-u^3$,
that is, the original equations are
\begin{align}\label{eq:ndw:-u^3}
    \left\{ \begin{aligned}[2]
    &\frac{\partial^2 u}{\partial t^2} = \frac{\partial^2 u}{\partial x^2} - u^3,
    &&(t,x) \in (0,6)\times (0,6),\\
    &\frac{\partial^2 u}{\partial t^2}(t,0)
    - \frac{\partial u}{\partial x}(t,0) = 0,
    &&t\in (0,6),\\
    &\frac{\partial^2 u}{\partial t^2}(t,6)
    +\frac{\partial u}{\partial x}(t,6) = 0,
    &&t\in (0,6).
    \end{aligned}\right.
\end{align}
The corresponding difference scheme is given by
\eqref{eq:scheme:-u^3:sec3}.

In the following numerical simulation,
we take
$N=2000$, $K = 100$,
and three kinds of initial data:
\begin{itemize}
    \item[Case 1.]
    $u(0,x) = e^{-(x-L/2)^2}$
    and
    $\partial_t u(0,x) = 0$, 
    \item[Case 2.]
    $u(0,x) = e^{-4(x-L/3)^2} + e^{-4(x-2L/3)^2}$
    and
    $\partial_t u (0,x) = -4(x-L/3) e^{-4(x-L/3)^2}$, 
    \item[Case 3.]
    $u(0,x) = 5 e^{-4(x-L/3)^2} + e^{-4(x-2L/3)^2}$
    and
    $\partial_t u (0,x) = -4(x-L/3) e^{-4(x-L/3)^2}$. 
\end{itemize}
The behavior of solutions are given in Figure \ref{fig:1}. 
\begin{figure}[H]
\begin{center}
{\includegraphics[width=6cm]{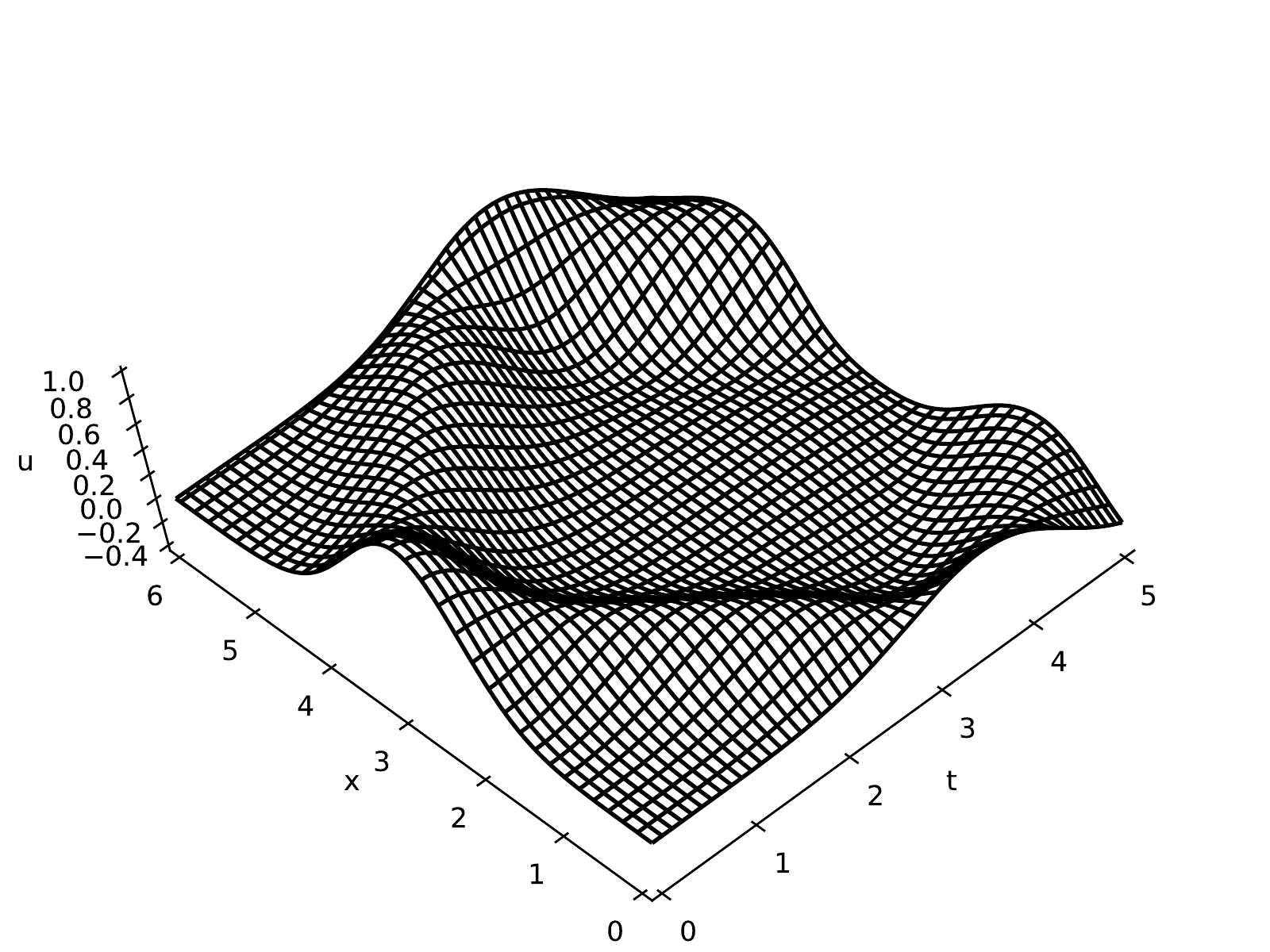} 
\includegraphics[width=6cm]{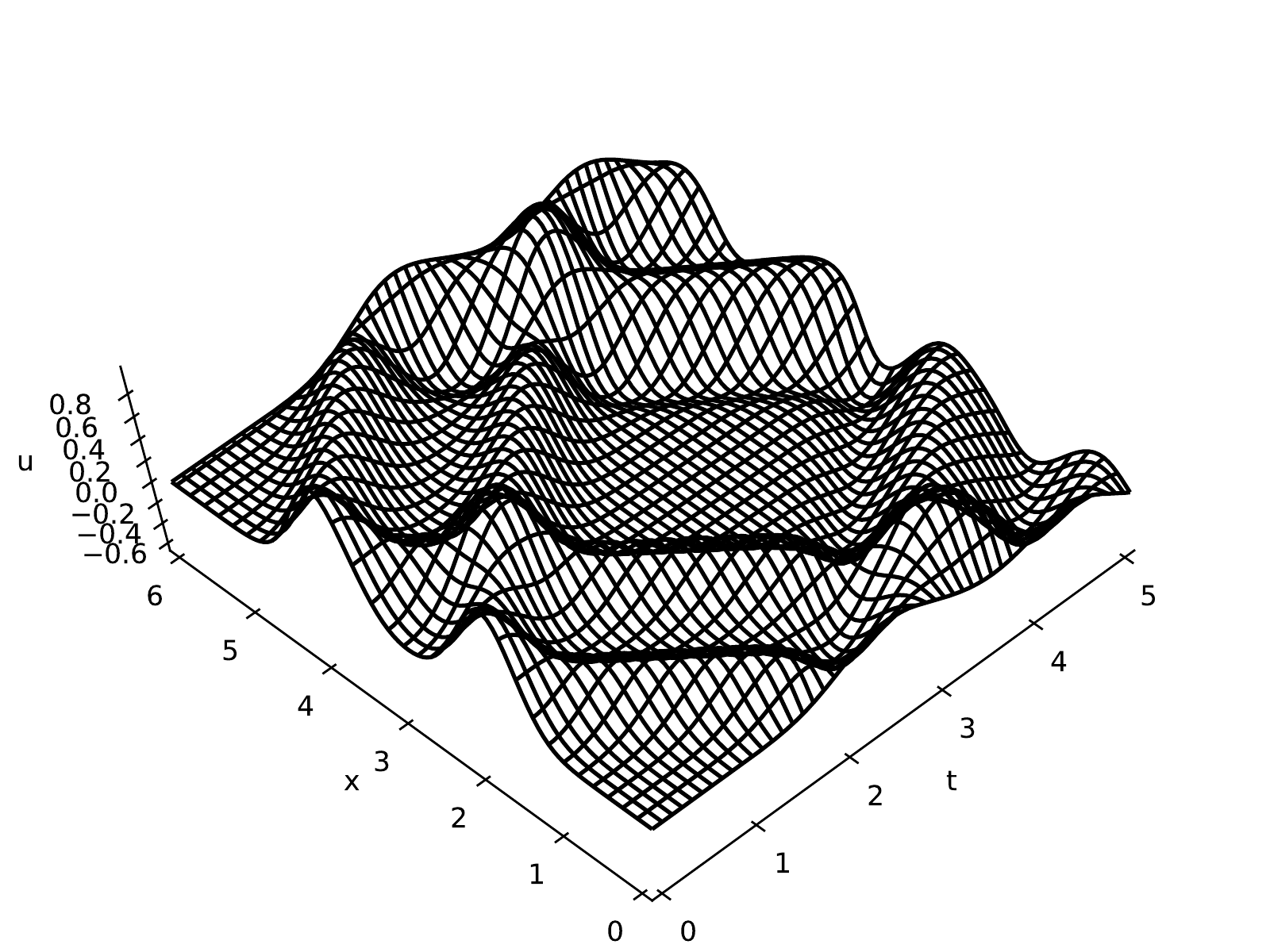}
\includegraphics[width=6cm]{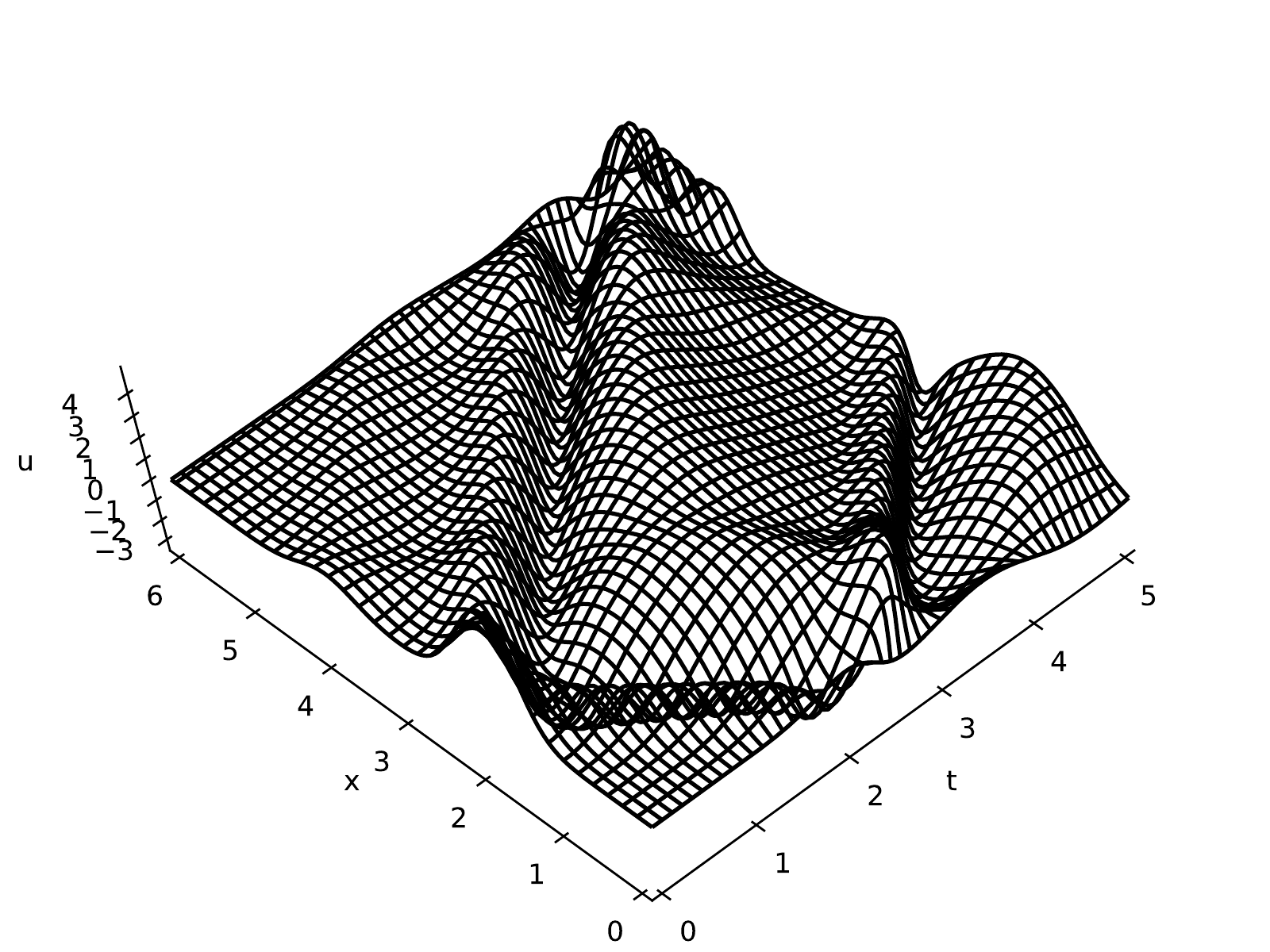}}
\caption{$U_k^{(n)}$ in Case 1 (upper left), Case 2 (upper right) and Case 3 (lower)}
    \label{fig:1} 
\end{center}
\end{figure}

The energy conservation for the solutions are confirmed from Figure \ref{fig:2}.  
\begin{figure}[H]
\begin{center}
\includegraphics[width=12cm]{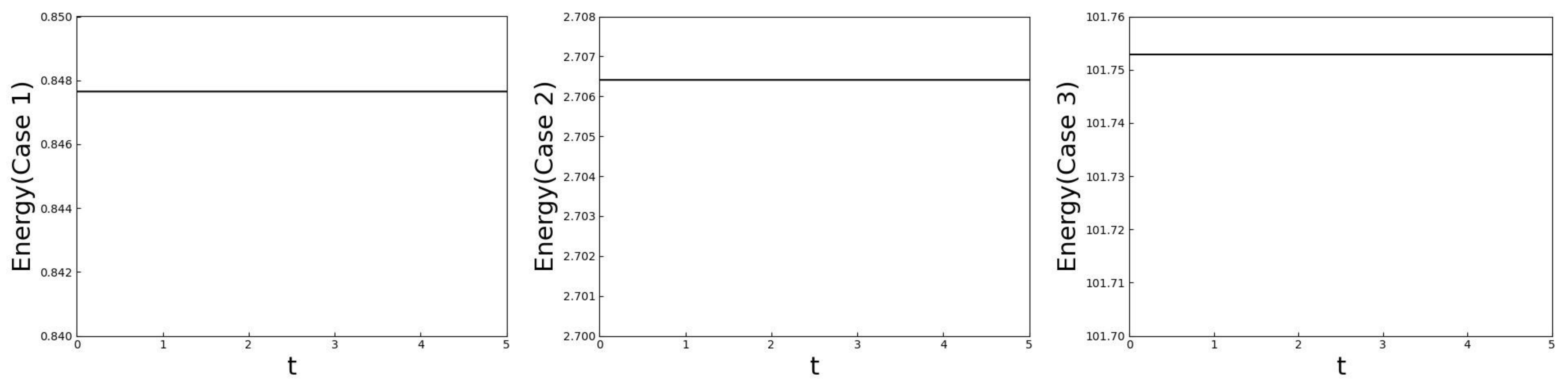} 
\caption{The conservation of the energy $J_{\mathrm{d}}(\vect{U}^{(n+1)}, \vect{U}^{(n)})$ for \eqref{eq:scheme:-u^3:sec3}}
    \label{fig:2} 
\end{center}
\end{figure}

In Figure \ref{fig:neuman},
we show a comparison with
the case of Neumann boundary condition. 
When we studied the parabolic type problem such as the Cahn-Hilliard equation or 
the Allen-Cahn equation, the behavior of solution with dynamic boundary condition near boundary tends to the 
one with the Neumann boundary condition, due to the parabolic property. 
On the other hand, we can observe a distinct behavior in the case of hyperbolic 
problem. 
\begin{figure}[H]
\begin{center}
\includegraphics[width=10cm]{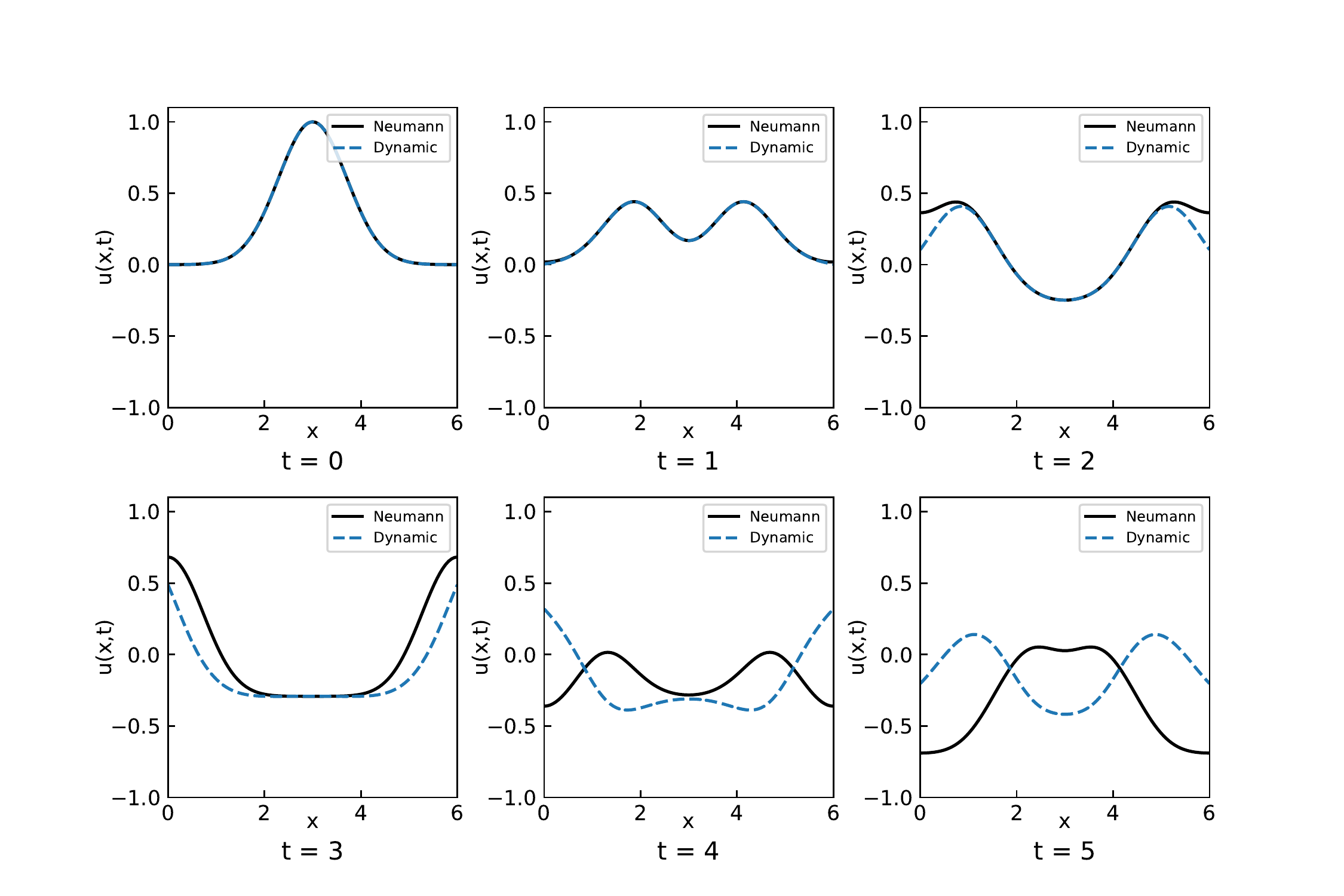}
\caption{Comparison with Neumann boundary condition for
the initial data
$u(0,x) = e^{-(x-L/2)^2}$
and $\partial_t u(0,x) = 0$ (Case 1)}
    \label{fig:neuman} 
\end{center}
\end{figure}

Lastly, we also show the numerical simulation for the sine-Gordon equation: 
\begin{align}\label{eq:ndw:sin(u)}
    \left\{ \begin{aligned}[2]
    &\frac{\partial^2 u}{\partial t^2} = \frac{\partial^2 u}{\partial x^2} - \sin(u),
    &&(t,x) \in (0,6)\times (0,6),\\
    &\frac{\partial^2 u}{\partial t^2}(t,0)
    - \frac{\partial u}{\partial x}(t,0) = 0,
    &&t\in (0,6),\\
    &\frac{\partial^2 u}{\partial t^2}(t,6)
    +\frac{\partial u}{\partial x}(t,6) = 0,
    &&t\in (0,6).
    \end{aligned}\right.
\end{align}
As mentioned in Section 3, the corresponding difference scheme is given by
\eqref{eq:scheme:sineGordon:sec3}.

The behavior of solutions are given in Figure \ref{fig:s1}. 
The simulation is demonstrated for the same initial conditions as 
the polynomial nonlinear case \eqref{eq:ndw:-u^3}. 
\begin{figure}[H]
\begin{center}
{\includegraphics[width=6cm]{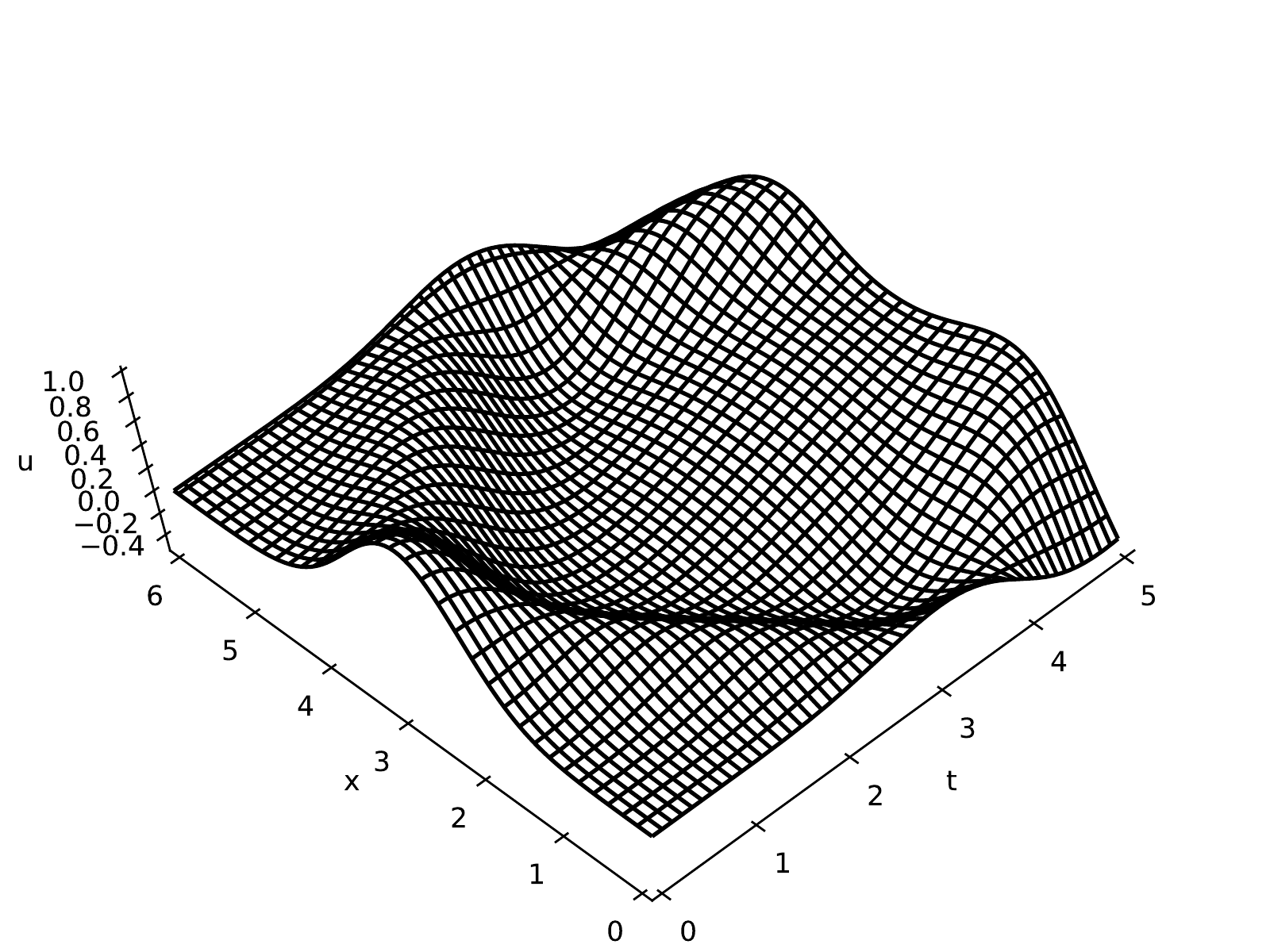} 
\includegraphics[width=6cm]{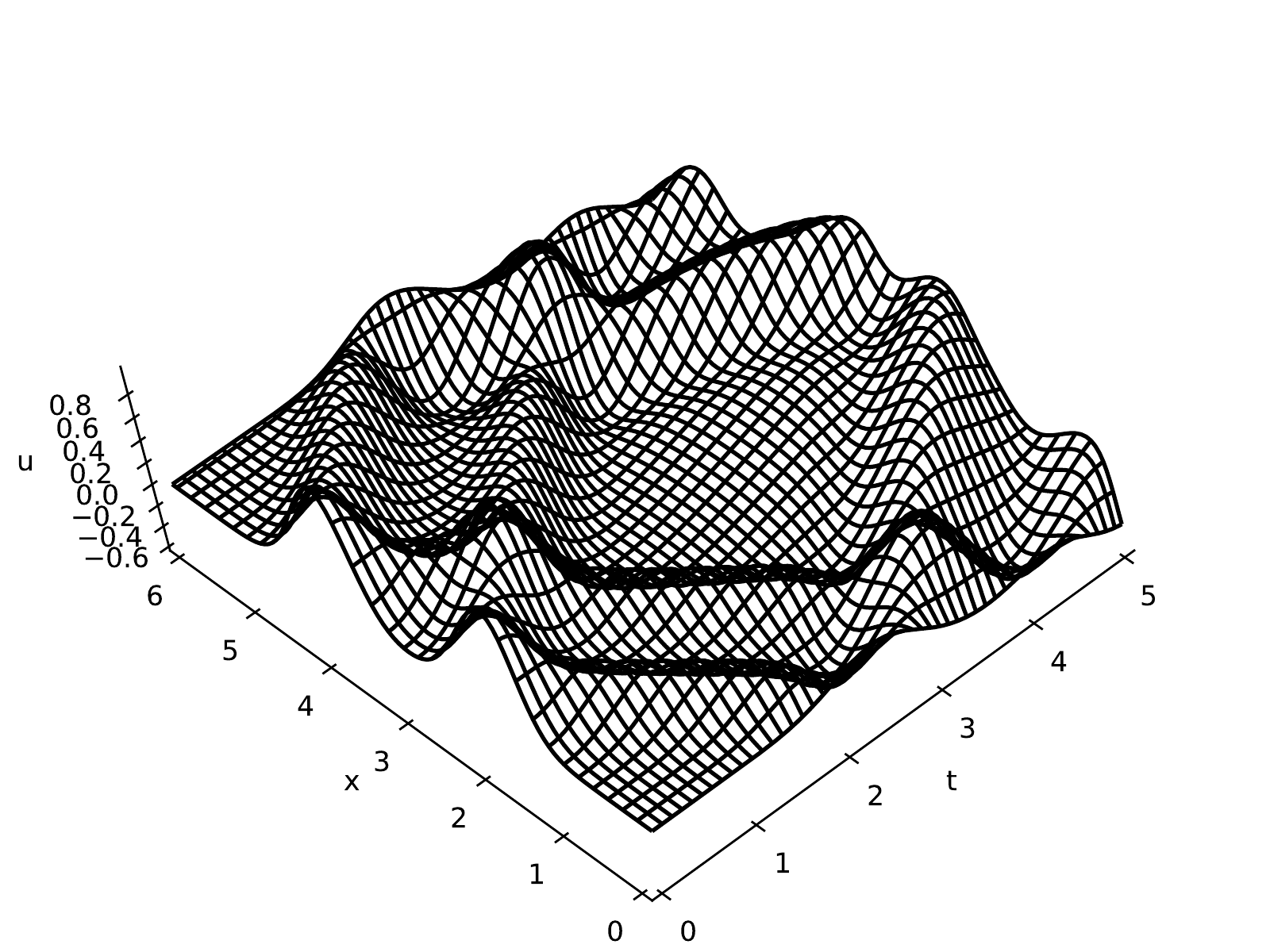}
\includegraphics[width=6cm]{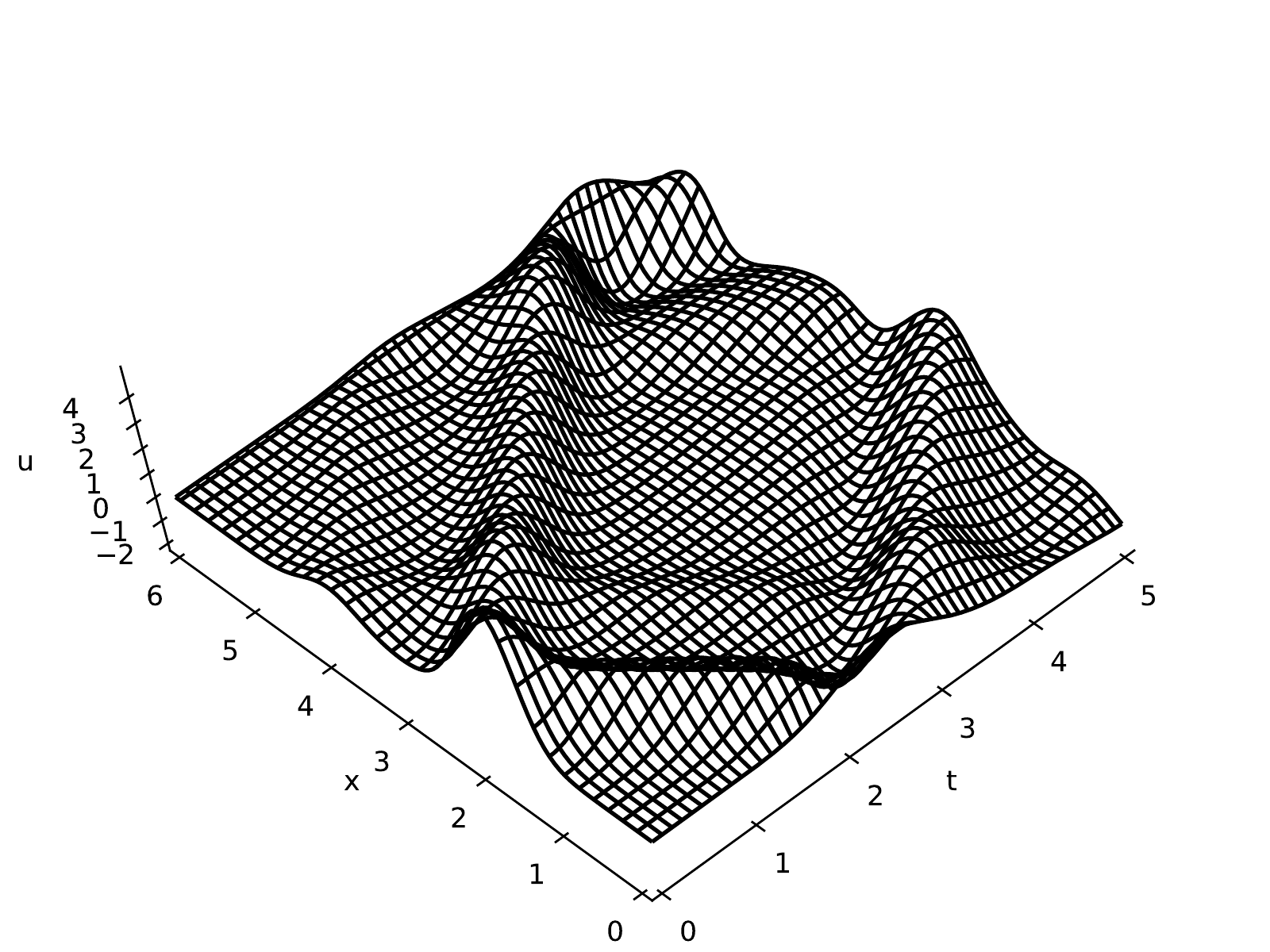}}
\caption{$U_k^{(n)}$ for \eqref{eq:scheme:sineGordon:sec3} in Case 1 (upper left), Case 2 (upper right) and Case 3 (lower)}
    \label{fig:s1} 
\end{center}
\end{figure}
The energy conservation for the solutions are confirmed from Figure \ref{fig:s2}.  
\begin{figure}[H]
\begin{center}
\includegraphics[width=12cm]{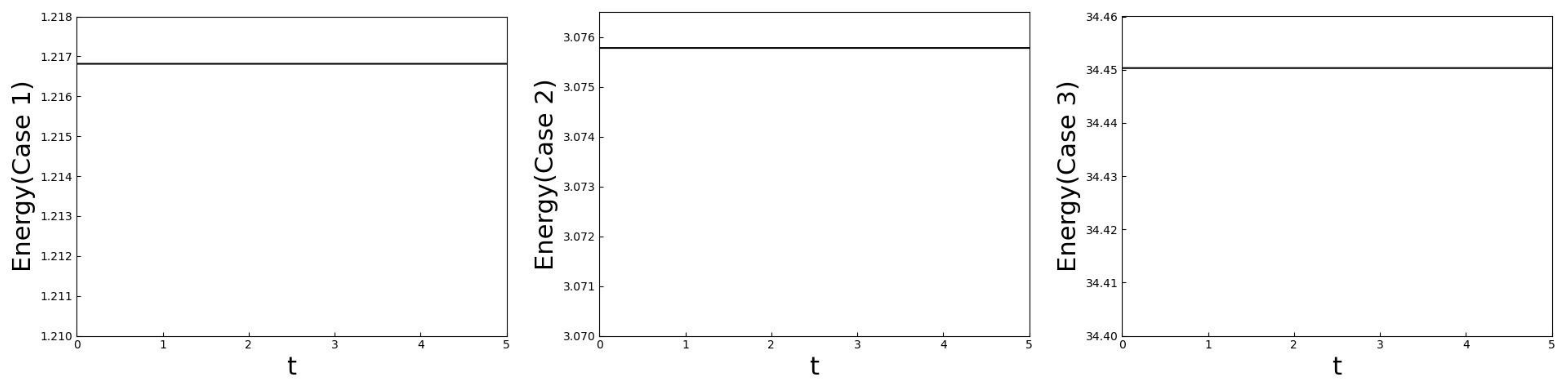} 
\caption{The conservation of the energy $J_{\mathrm{d}}(\vect{U}^{(n+1)}, \vect{U}^{(n)})$ for \eqref{eq:scheme:sineGordon:sec3}}
    \label{fig:s2} 
\end{center}
\end{figure}

\section*{Acknowledgments}
This work was written while
the first author was a master course student in
Ehime University.
This work was supported by JSPS KAKENHI Grant Numbers 
JP16K05234,
JP16K17625,
JP18H01132,
JP20K14346, 
JP20KK0308.


\end{document}